\documentclass{amsart}

\usepackage[backend=biber,maxbibnames=5,maxalphanames=5,style=alphabetic,bibencoding=utf8,giveninits,url=false,isbn=false]{biblatex}
\AtBeginBibliography{\small}

\usepackage{verse}

\newlength\lineindent

\usepackage[all,hyperref]{bi-discrete}
\usepackage{cleveref}
\usepackage{tikz}
\usetikzlibrary{calc}

\usepackage{mathtools}

\usepackage{algorithm}
\usepackage{algpseudocode}

\providecommand{\du}{\, \mathrm{d}u}
\providecommand{\ds}{\, \mathrm{d}s}

\providecommand{\R}{\mathbb{R}}

\providecommand{\cA}{\mathcal{A}}

\usepackage{dsfont}
\usepackage{pgfplots}
\usepackage{tikz}
\usepackage{listings}
\usepackage{stmaryrd}

\usepackage{tikz}
\usepackage{pgfplots}
\usepackage{pgfplotstable} 
\pgfplotsset{compat=1.18} 

\pgfplotsset{
  width=.65\linewidth,
  axis background/.style={fill=black!5!white},
  grid style={densely dotted,semithick},
  legend style={
    legend columns=1,
    legend pos=outer north east
  },
  compat=newest 
}
\pgfplotscreateplotcyclelist{MyCol1}{%
    {red,mark = *,every mark/.append style={solid,scale=0.4,fill=red}},
	{dashed,blue,mark = square,every mark/.append style={solid,scale=0.4,fill=blue}},
    {dotted,black,mark = x,every mark/.append style={solid,scale=0.8,fill=black}}
    }
\pgfplotscreateplotcyclelist{MyCol1b}{%
    {red},
	{dashed,blue},
    {dotted,black}
    }

\pgfplotscreateplotcyclelist{MyColors}{%
    {red,mark = *,every mark/.append style={solid,scale=0.4,fill=red}},
	{dotted,red,mark = *,every mark/.append style={solid,scale=0.4,fill=red}},
    {teal,mark = x,every mark/.append style={solid,scale=0.4,fill=teal}},
    {blue,mark = square*,every mark/.append style={solid,scale=0.4,fill=blue}},
    {dotted,teal,mark = x,every mark/.append style={solid,scale=0.4,fill=teal}},
    {dotted,blue,mark = square*,every mark/.append style={solid,scale=0.4,fill=blue}},
{dash dot,red,mark = *,every mark/.append style={solid,scale=0.4,fill=red}},
    {dash dot,teal,mark = x,every mark/.append style={solid,scale=0.4,fill=teal}},
    {dash dot,blue,mark = square*,every mark/.append style={solid,scale=0.4,fill=blue}},}

\pgfplotscreateplotcyclelist{MyColors2}{%
    {blue,mark = square,every mark/.append style={solid,scale=0.3,fill=blue}},
    {red,mark = *,every mark/.append style={solid,scale=0.3,fill=red}},
	{dotted,blue,mark = square*,every mark/.append style={solid,scale=0.1,fill=blue}},
    {dotted,red,mark = *,every mark/.append style={solid,scale=0.1,fill=red}},
	{teal,mark = x,every mark/.append style={solid,scale=0.3,fill=teal}},
    {dotted,teal,mark = x,every mark/.append style={solid,scale=0.1,fill=teal}},
    {dotted,blue,mark = square*,every mark/.append style={solid,scale=0.1,fill=blue}},
{dash dot,red,mark = *,every mark/.append style={solid,scale=0.4,fill=red}},
    {dash dot,teal,mark = x,every mark/.append style={solid,scale=0.4,fill=teal}},
    {dash dot,blue,mark = square*,every mark/.append style={solid,scale=0.4,fill=blue}},}

\pgfplotscreateplotcyclelist{MyColors3}{%
    {blue,mark = square,every mark/.append style={solid,scale=0.5,fill=blue}},
    {red,mark = *,every mark/.append style={solid,scale=0.5,fill=red}},
	{teal,mark = x,every mark/.append style={solid,scale=0.5,fill=teal}},
    {dotted,teal,mark = x,every mark/.append style={solid,scale=0.1,fill=teal}},
    {dotted,blue,mark = square*,every mark/.append style={solid,scale=0.1,fill=blue}},
{dash dot,red,mark = *,every mark/.append style={solid,scale=0.4,fill=red}},
    {dash dot,teal,mark = x,every mark/.append style={solid,scale=0.4,fill=teal}},
    {dash dot,blue,mark = square*,every mark/.append style={solid,scale=0.4,fill=blue}},}

\makeatletter
\@namedef{subjclassname@2020}{%
  \textup{2020} Mathematics Subject Classification}
\makeatother

\usepackage{lineno}

\begin{document}
\lstset{language=Python,
basicstyle=\small, 
keywordstyle=\color{black}\bfseries, 
commentstyle=\color{blue}, 
stringstyle=\ttfamily, 
showstringspaces=false,
numbers=left, 
numberstyle=\small, 
numbersep=10 pt,
xleftmargin= 27pt,
}

\author[J.~Storn]{Johannes Storn}
\address[J.~Storn]{Faculty of Mathematics \& Computer Science, Institute of Mathematics, Leipzig University, Augustusplatz 10, 04109 Leipzig, Germany}
\email{jstorn@math.uni-bielefeld.de}

\keywords{}
\subjclass[2020]{
 	65K10,	
	49M05,	
 	49M20,	
 	49M29,	
 	68T09,	
 	05C50,	
 	39A12
}

\thanks{The work of the author was supported by the Deutsche Forschungsgemeinschaft (DFG, German Research Foundation) -- SFB 1283/2 2026 -- 317210226.}

\title[Dual IRLS scheme for graph $p$-Laplacians and $\ell^p$ regression]{The dual IRLS scheme for (hyper-)graph $p$-Laplacians and $\ell^p$ regression with large exponents} 

\begin{abstract}
We introduce an iterative scheme for discrete convex minimization problems of $p$-Laplace type such as variational graph $p$-Laplace problems and $\ell^p$ regression. In each iteration, the scheme solves only a weighted least-squares problem. We verify linear convergence for suitably regularized problems and derive convergence to any prescribed tolerance.
\end{abstract}
\maketitle

\section{Introduction}
The graph Laplacian is a fundamental operator in data science and network analysis, with important applications in semi-supervised learning \cite{ZhuGhahramaniLafferty03}, spectral clustering and graph partitioning \cite{Luxburg07}, image segmentation \cite{ShiMalik00}, manifold learning \cite{BelkinNiyogi03}, graph signal processing \cite{ShumanNarangFrossardOrtega13}, and diffusion or consensus processes on networks \cite{OlfatiFaxMurray07}.
These applications rely on the graph Laplacian as a discrete notion of smoothness and diffusion, whose spectral structure also captures global geometric features of the underlying graph.
In certain situations, however, the standard graph Laplacian has severe drawbacks. 
For example, in semi-supervised learning with very few labels, Laplacian learning may become degenerate \cite{NadlerSrebroZhou09}. This motivates the use of the (variational) graph $p$-Laplacian with large exponents $p\gg 2$ as a nonlinear alternative with improved regularity and smoothing properties \cite{AlamgirLuxburg11,FloresCalderLerman22}.
A practical downside of this approach is that the associated minimization problem is difficult to solve. More precisely, let $B\colon \mathbb{R}^N \to \mathbb{R}^M$ for $N,M\in \mathbb{N}$ be a discrete gradient and let  $(w_\alpha)_{\alpha=1}^M \subset (0,\infty)$ denote positive weights. We seek the (constrained) minimizer of the $p$-Dirichlet energy
\begin{align*}
J(v) \coloneqq \frac{1}{p} \sum_{\alpha=1}^M w_\alpha\, |(B v)_\alpha|^p \qquad\text{for all }v\in \mathbb{R}^N.
\end{align*}  
A closely related minimization problem is $\ell^p$ regression, where one seeks with given matrix $A \in \mathbb{R}^{M \times N}$ and vector $b\in \mathbb{R}^M$ the (constrained) minimizer of the residual 
\begin{align*}
G(v) \coloneqq \lVert A v - b \lVert^p_{\ell^p}\qquad\text{for all }v\in \mathbb{R}^N.
\end{align*}
Such problems occur for example in digital filters~\cite{VargasBurrus09}, $L^p$ polynomial regression~\cite{MeyerMuscoMuscoWoodruffZhou23}, or clustering~\cite{HuangVishnoi20,HuangShaofengJianingXuan22}. Corresponding solvers have been studied intensively in recent years \cite{BubeckCohenLeeLi18,AdilPengSachdeva19,AdilKyngPengSachdeva24,EneNguyenVladu25}
While there exist schemes that work well for $p < 2$ \cite{Osborne85,Li93}, existing iterative schemes struggle for large exponents $p \gg 2$. 
We address this challenging problem by exploiting the iteratively reweighted least-squares (IRLS) scheme. This scheme, however, is known to diverge for values $p> 3$, see for example \cite{Osborne85}. We thus extend an idea that has been introduced for PDEs in \cite{BalciDieningStorn23}: Rather than minimizing the primal energy, we exploit duality to obtain an iterative scheme for the dual problem. The corresponding dual minimization scheme involves the dual exponent $p'$ which is smaller than two for values $p$ larger than two. Consequently, the IRLS converges linearly for the dual problem, which we verify for suitable regularizations following arguments from \cite{DieningFornasierTomasiWank20}. We then rewrite the linearized dual problem as a reweighted primal problem, leading to a novel reweighted primal least-squares scheme, which we call the dual reweighted least-squares method (dIRLS). 

Section~\ref{sec:graph-plap} introduces our abstract model problem and illustrates its generality by some examples. We proceed in Section~\ref{sec:dIRLS} by introducing the dual formulation of our minimization problem as well as the dual IRLS scheme. Our analysis includes a linear convergence result for suitable regularizations as well as an investigation of the impact of the regularization. Combining these results yields the convergence statements in \eqref{eq:COnvergenceResult}.
We conclude our studies by numerical experiments indicating the practical performance of our scheme as well as its superiority to existing methods.

\section{The (Hyper-)Graph $p$-Laplacian}\label{sec:graph-plap}
Throughout this paper, we consider the following model situation covering various graph, hypergraph, and $\ell^p$ regression models.
Examples are discussed in Section~\ref{subsec:Applications}.

\subsection{Minimization problem}
Let $\cA=\lbrace 1,\dots,M\rbrace$ with $M\in \mathbb{N}$ be an abstract index set representing, for example, edges of a graph, vertex-hyperedge incidences of a hypergraph, or hyperarcs. Moreover, let $V = \lbrace 1,\dots,N\rbrace$ with $N\in \mathbb{N}$ be a set of indices representing for example vertices and let $B\colon \mathbb{R}^N \to \mathbb{R}^M$ be a linear operator, representing a discrete gradient. Moreover, let $\phi \colon [0,\infty) \to [0,\infty)$ be an N-function, defined below, such as $\phi(t) = t^p/p$ with $p \in (1,\infty)$.
\begin{definition}[N-function]\label{def:NfunctionIntro}
A function $\phi\colon \mathbb{R}_{\geq 0} \to \mathbb{R}_{\geq 0}$ is an N-function, if
\begin{enumerate}
\item $\phi$ is continuous and convex,
\item there is a right-continuous and non-decreasing function $\phi'\colon \mathbb{R}_{\geq 0} \to \mathbb{R}_{\geq 0}$ that satisfies $\lim_{t\to \infty} \phi'(t) = \infty$ as well as $\phi'(0) = 0$ such that we have the positivity $\phi'(t) >0$ for all $t>0$ and 
\begin{align*}
\phi(t) = \int_0^t \phi'(s)\ds.
\end{align*}
\end{enumerate}
\end{definition} 
We aim at minimizing an energy under a linear constraint. The constraint is defined via a matrix $C \in \mathbb{R}^{m_c \times N}$ and a vector $d\in \mathbb{R}^{m_c}$ with $m_c \in \mathbb{N}_0$. In particular, we want to minimize the energy over all vectors $v\in \mathbb{R}^N$ with $Cv = d$. This constraint may include identities such as $v_i = d_i$ for all $i \in L \subset \lbrace 1,\dots, N\rbrace$, where $L$ represents the set of labeled indices. We suppose that there exists a $g \in \mathbb{R}^N$ that satisfies the constraint; that is, $C g = d$ and define the space $ V_0 \coloneqq \ker(C)$.

We write $(Bv)_\alpha$ for the $\alpha$-th component with $\alpha \in \cA$ and $v\in \mathbb{R}^N$. 
Given positive weights $w = (w_\alpha)_{\alpha \in \cA} \in \mathbb{R}^M$ and a right-hand side $f \in \mathbb{R}^N$, we set the energy
\begin{align}\label{eq:AbstractEnergy}
J(v) \coloneqq \sum_{\alpha \in \cA} w_\alpha \phi(|(Bv)_\alpha|) - f \cdot v\qquad\text{for all }v\in \mathbb{R}^N.
\end{align} 
With this energy, we seek the (constrained) minimizer $u_g = u + g$ with 
\begin{align}\label{eq:MinProb}
u = \argmin_{v\in V_0} J(v + g).
\end{align}
If we have $\ker(B) \cap V_0 = \lbrace 0 \rbrace$, the direct method in the calculus of variations yields well-posedness of the minimization problem above in the sense that there exists a unique minimizer. To characterize this minimizer $u\in V_0$, we define the function
\begin{align*}
A_\phi(t) \coloneqq \begin{cases}
 \tfrac{\phi'(|t|)}{|t|} t &\text{for }t \neq 0,\\
0 & \text{for }t = 0.
\end{cases}
\end{align*}
Differentiation characterizes $u\in V_0$ via the variational problem
\begin{align}\label{eq:var-abstract}
\sum_{\alpha\in\cA} w_\alpha\, A_\phi\big((B(u+g))_\alpha\big)\,(Bv)_\alpha
= f\cdot v
\qquad\text{for all } v\in V_0.
\end{align}

\subsection{Applications}\label{subsec:Applications}
Before we introduce our iterative solver for the problem in \eqref{eq:MinProb}, we discuss typical applications fitting in the abstract framework.
\subsubsection{Graph-based semi-supervised learning}\label{subsec:app-ssl}
Let $G=(V,W)$ be a weighted graph with $N\coloneqq |V|$ vertices $V$ and nonnegative edge weights $W = (w_{ij})_{i,j = 1}^N \in \mathbb{R}^{N\times N}$, where $w_{ij} \approx 1$ if vertex $i$ is similar to vertex $j$ and $w_{ij} \approx 0$ if $i$ and $j$ are dissimilar. The corresponding $p$-Dirichlet energy with $p \in (1,\infty)$ reads
\begin{align*}
J(v) \coloneqq \frac{1}{p} \sum_{i,j \in V} w_{ij} |v_i-v_j|^p \qquad\text{for all }v \in \mathbb{R}^N.
\end{align*}
To fit into our framework, we assume that the graph is undirected; that is, $w_{ij} = w_{ji}$ for all $i,j\in V$. We define the set of all edges $\mathcal{E} \coloneqq \big\lbrace \lbrace i,j\rbrace \colon i,j\in V, i\neq j\big\rbrace$ and set the weights $w_e \coloneqq 2 w_{ij} = 2 w_{ji}$ for all edges $e = \lbrace i , j\rbrace \in \mathcal{E}$. The  index set reads  
\begin{align*}
\cA \coloneqq \big\lbrace e \in \mathcal{E} \colon w_e > 0\rbrace \eqsim \lbrace 1, \dots, M\rbrace\qquad\text{with }M \coloneqq |\cA|. 
\end{align*}
For any edge $e = \lbrace i,j\rbrace \in \mathcal{E}$, we fix its orientation $e = [i,j]$ and define the discrete gradient $(B v)_e \coloneqq v_i - v_j$ for all $v\in \mathbb{R}^N$. This allows us to rewrite the $p$-Dirichlet energy in \eqref{eq:AbstractEnergy} with $\phi(t) \coloneqq t^p/p$ as 
\begin{align*}
J(v) = \sum_{e\in \cA} w_e \phi\big( |(Bv)_e| \big)  \qquad\text{for all }v \in \mathbb{R}^N.
\end{align*}
This model is the standard variational graph $p$-Laplacian, introduced as a regularizer in \cite{AlamgirLuxburg11} and further investigated, among others, in \cite{BridleZhu13,AlaouiChengRamdasWainwrithJordan16,Flores18,SlepcevThorpe19,FloresCalderLerman22}. 
The approach generalizes popular graph Laplacian regularizers which correspond to the (linear) case $p=2$ and have been introduced in \cite{ZhuGhahramaniLafferty03} for graph-based semi-supervised learning. The overall goal in this application is to learn labels indexed by $U = \lbrace 1,\dots,n\rbrace$ on a large set of data points $V$ when only a small set indexed by $L = \lbrace n+1,\dots,N\rbrace$ with $N-n \ll n$ in $V$ is labeled by given values $(g_i)_{i \in L} \in \mathbb{R}^{N-n}$; that is, our linear constraint reads $v_i = g_i$ for all $i \in L$  with kernel $V_0 = \lbrace v \in \mathbb{R}^N \colon v_i = 0\text{ for all }i \in L\rbrace  \eqsim \mathbb{R}^n$.
The key idea is to exploit the structure of the unlabeled data: points that are ``close'' should get similar predictions. This approach then leads to the minimization problem in \eqref{eq:MinProb}. The linear setting $p=2$ becomes ill-posed as the number of vertices tends to infinity while the number of labeled data points remains finite \cite{NadlerSrebroZhou09}, causing the minimizer $u$ to be nearly constant throughout the entire graph with spikes near the labeled data in practical applications. Large exponents $p \gg 2$ provide a remedy to this problem, see for example \cite{FloresCalderLerman22}.
\subsubsection{Hypergraph models}\label{subsec:app-hypergraph}
Hypergraphs extend graph models with vertices $V$ by allowing for hyperedges $h\subset V$ connecting more than two vertices in order to model more complex relationships between objects \cite{ZhouHuangSchoelkopf06}. Let $\mathcal{H}$ denote the set of hyperedges and let $(w_h)_{h\in \mathcal{H}} \subset (0,\infty)$ denote associated positive weights. 
There are several non-equivalent hypergraph Laplacians in the literature -- not all of them fit the framework with linear gradient $B$. The class of operators fitting into the framework include for example the clique expansion suggested in \cite{ZienSchlagChan02} that creates a graph with edges 
\begin{align*}
\mathcal{E} \coloneqq \big\lbrace \lbrace i,j\rbrace \colon i,j \in h \in \mathcal{H}\text{ with }i\neq j\big\rbrace.
\end{align*}
The corresponding edge weights read for all $e = \lbrace i,j\rbrace \in \mathcal{E} $ \cite[Eq.~11]{AgarwalBransonBelongie06}
\begin{align*}
w_e \coloneqq \argmin_{x > 0} \sum_{ \lbrace h \in \mathcal{H}\colon i,j\in h\rbrace} ( x - w_h)^2.
\end{align*} 
This framework fits the representation discussed in Section~\ref{subsec:app-ssl} above. Further suitable models are discussed for example in \cite{AgarwalBransonBelongie06}.

\subsubsection{Linear regression in $\ell^p$}\label{subsec:linReg}
A closely related instance of our abstract energy minimization problem is $\ell^p$-regression. Given $p \in [2,\infty)$, $A\in\mathbb{R}^{M\times N}$, $b\in\mathbb{R}^M$,  $\tilde C\in\mathbb{R}^{m_c\times N}$, and $\tilde d\in\mathbb{R}^{m_c}$ with $m_c \in \mathbb{N}_0$, this problem seeks the minimizer
\begin{align*}
\tilde u_g \in \argmin_{\lbrace v \in \mathbb{R}^N\colon \tilde Cv=\tilde d \rbrace} \lVert Av-b\rVert_p^p.
\end{align*}
The mapping $v \mapsto  Av-b$ is affine. In order to obtain a linear operator that fits into our framework, we set $\tilde N \coloneqq N + M$ and define with identity matrix $\identity \in \mathbb{R}^{M \times M}$
\begin{align*}
B \coloneqq (A \ \ -\identity) \in \mathbb{R}^{M \times \tilde N}.
\end{align*}
Moreover, we set 
\begin{align*}
C \coloneqq \begin{pmatrix}
\tilde{C} & 0 \\ 0 & \identity
\end{pmatrix} \in \mathbb{R}^{(m_c + M) \times\tilde  N}
\qquad\text{and}\qquad
d \coloneqq \begin{pmatrix}
\tilde d \\ 
b
\end{pmatrix} \in \mathbb{R}^{m_c + M}.
\end{align*}
With $V_0 \coloneqq \ker C$, $g \in \mathbb{R}^{\tilde N}$ with $C g = d$, and $\phi(t) \coloneqq t^p/p$ we obtain the minimization problem: Seek $u_g$ via $u_g = u +  g$ with
\begin{align*}
u \in \argmin_{ v \in V_0 }  \sum_{\alpha = 1}^M  \phi \big( | (B(v  + g))_\alpha |\big).
\end{align*}
The minimizers of the original and lifted problem satisfy by definition 
\begin{align*}
u_g = \begin{pmatrix}
\tilde{u}_g \\ b
\end{pmatrix}.
\end{align*}

\section{Dual IRLS scheme}\label{sec:dIRLS}

A prominent approach for solving variational problems such as \eqref{eq:var-abstract} is the iteratively reweighted least squares (IRLS) linearization, also known as \Kacanov{} iteration in the context of PDEs \cite{DieningFornasierTomasiWank20,HeidWihler20}. This approach computes iteratively the solution $u^{n+1} \in V_0$  to the linearized problem 
\begin{align*}
\sum_{\alpha\in\cA} w_\alpha \frac{\phi'(|(B(u^n+g))_\alpha|)}{|(B(u^n+g))_\alpha|} (B(u^{n+1}+g))_\alpha\,(Bv)_\alpha = f\cdot v \qquad\text{for all } v\in V_0.
\end{align*}
However, this approach fails for (regularized versions of) $\phi(t) = t^p/p$ with exponents $p \geq 3$, as shown in \cite[Rem.~21]{DieningFornasierTomasiWank20} with an example in the context of the PDE and as observed numerically in \cite{FloresCalderLerman22} for the graph $p$-Laplacian. Our ansatz exploits a remedy used for PDEs in \cite{BalciDieningStorn23,DieningStorn25}: We apply the IRLS linearization to the dual problem with dual exponent $p' = p/(p-1) < 2$ for $p>2$. This allows for the direct application of the linear convergence result in \cite{DieningFornasierTomasiWank20}. Then we rewrite the linearized dual problem as a weighted primal problem, resulting in an iteratively reweighted least squares method with alternative weights. The remainder of this section introduces and analyzes this idea in more detail.

\subsection{Dual problem}
The derivation of the dual problem follows classical theory on convex analysis, cf.~\cite{RockafellarWets98}.
In the following, we discuss some of the involved steps, as these intermediate results are used in the analysis of the dIRLS.

We set the Fenchel conjugate  $\phi^*$ for all $r \geq 0$ by 
\begin{align*}
\phi^*(r) \coloneqq \sup_{s\geq 0} \big(rs - \phi(s)\big). 
\end{align*} 
Figure~\ref{fig:NfunctionsIntro} displays the convex conjugate $\phi^*$ and illustrates the following properties proven for example in \cite{DieningEttwein08,DieningFornasierTomasiWank20}. 
\begin{figure}
  \begin{tikzpicture}[scale=1.6]
    \fill [gray!90, domain=0:2.6, variable=\x]
      (0, 0)
      -- plot ({\x}, {1-cos(deg(pi/3*\x)))})
      -- (2.6, 0)
      -- cycle;
      \fill [gray!30, domain=0:2, variable=\x]
      (0, 0)
      -- plot ({\x}, {1-cos(deg(pi/3*\x)))})
      -- (0, {1-cos(deg(pi/3*2))})
      -- cycle;

	\node[fill=white] () at (2,.7) {$\phi(s)$};
	\node[fill=white] () at (.8,1) {$\phi^*(r)$};
	\node[below] () at (2.6,0) {$s$};
	\node[left] () at (0,{1-cos(deg(pi/3*2))}) {$r$};

	\draw[dashed] (0,0) 
	-- (2.6,0)
	-- (2.6, {1-cos(deg(pi/3*2))}) 
	-- (0, {1-cos(deg(pi/3*2))});
	
    \draw [thick] [->] (0,0)--(2.8,0) node[right] {$t$};
    \draw [thick] [->] (0,0)--(0,2.1) node[above] {$u$};
    \draw [domain=0:2.8, variable=\x]
      plot ({\x}, {1-cos(deg(pi/3*\x)))}) node[right] {$\phi'(t),\ (\phi^*)'(u)$};
  \end{tikzpicture}
  \caption{Plot of derivatives $\phi'$ and $(\phi^*)'$. The area marked dark gray equals $\phi(s)$, the area marked light gray equals $\phi^*(r)$. The area surrounded by the dashed line equals $rs$.}\label{fig:NfunctionsIntro}
\end{figure}
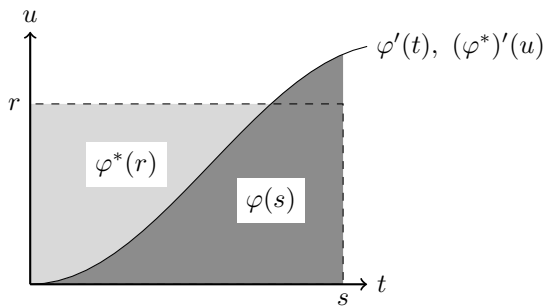%
\begin{proposition}[Conjugate of an N-function]\label{prop:Conjugate}
Let $\phi\colon \mathbb{R}_{\geq 0} \to \mathbb{R}_{\geq 0}$ be an N-function and define the right-continuous inverse
\begin{equation*}
(\phi')^{-1}(t) \coloneqq \sup\lbrace r \geq 0 \mid \phi'(r) \leq t\rbrace\qquad\text{for all }t\geq 0.
\end{equation*}
Then we have the following.
\begin{enumerate}
\item If $\phi'$ is strictly increasing, the function $(\phi')^{-1}$ equals the inverse of $\phi'$.\label{itm:inverse}
\item The convex conjugate of $\phi$ is an $N$-function with representation \label{itm:DualDerivativeEqual}
\begin{equation*}
\phi^*(r) =  \int_0^r (\phi')^{-1}(u)\du\qquad\text{for all }r \geq 0.
\end{equation*} 
\item One has for all $t\geq 0$ the identity \label{itm:ConjugateC}
\begin{equation*}
\phi^*(\phi'(t)) = \phi'(t)t - \phi(t).
\end{equation*}
\item The function $\phi$ equals its bi-conjugate, that is for all $t\geq 0$\label{itm:biConjugate}
\begin{equation*}
\phi(t) = \phi^{**}(t) \coloneqq \sup_{s \geq 0} \big(t s - \phi^*(s)\big).
\end{equation*}
\end{enumerate}
\end{proposition}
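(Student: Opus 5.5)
The plan is to work directly with the generalized inverse $\psi \coloneqq (\phi')^{-1}$ and prove the four items in the order (1), (3), (2), (4). The key tool is Young's inequality in its geometric form (Figure~\ref{fig:NfunctionsIntro}), namely
\begin{align*}
rs \leq \phi(s) + \int_0^r \psi(u)\du \qquad\text{for all }r,s\geq 0,
\end{align*}
with equality when $r = \phi'(s)$.

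\textbf{Item (1).} This is immediate. If $\phi'$ is strictly increasing and right-continuous, then for $t$ in the range of $\phi'$ the set $\lbrace r\colon \phi'(r)\leq t\rbrace$ is the interval $[0,(\phi')^{-1}(t)]$. Its supremum is therefore the unique preimage.

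\textbf{Preliminary properties of $\psi$.} I first check that $\psi$ is finite, non-decreasing and right-continuous. Finiteness uses $\phi'\to\infty$. Right-continuity follows since $\lbrace r\colon \phi'(r)\le t\rbrace=\bigcap_{t'>t}\lbrace r\colon\phi'(r)\le t'\rbrace$. Next, $\psi(0)=0$ because $\phi'(r)>0$ for $r>0$. Also $\psi(t)>0$ for $t>0$, since right-continuity of $\phi'$ at $0$ with $\phi'(0)=0$ gives $\phi'(r)\le t$ for small $r>0$. Finally $\psi(t)\to\infty$, because $\phi'$ is finite everywhere.

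\textbf{Young's inequality, item (3), and item (2).} Set $\Phi(r)\coloneqq\int_0^r\psi(u)\du$. By the preliminary properties, $\Phi$ is an N-function once (2) is established. I prove Young's inequality by the area argument. The region $\lbrace (t,u)\in[0,s]\times[0,r]\rbrace$ is covered by $\lbrace u\le\phi'(t)\rbrace\cup\lbrace t\le \psi(u)\rbrace$, and for $r=\phi'(s)$ the two pieces overlap only in a null set. Concretely, I verify this by Fubini: $\int_0^s\phi'(t)\,dt+\int_0^r\psi(u)\,du$ equals the Lebesgue measure of the union plus that of the intersection. The intersection $\lbrace u\le \phi'(t),\ t\le\psi(u)\rbrace$ is contained in a graph of a monotone relation, hence null. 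This measure-theoretic handling of jumps and flat parts of $\phi'$ is the main technical point. The cleanest way to carry it out is to note that $u<\phi'(t)$ implies $t>\psi(u^-)$, so the overlap lies in $\lbrace t\in[\psi(u^-),\psi(u)]\rbrace$, and that set is null because $\psi$ has countably many jumps.

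Equality at $r=\phi'(s)$ yields $\phi'(s)s=\phi(s)+\Phi(\phi'(s))$. Taking the supremum over $s$ in Young's inequality gives $\phi^*(r)\le\Phi(r)$. For the reverse inequality, given $r$, I choose $s=\psi(r)$. Then $\phi'(s)\ge r$ by right-continuity, and a similar area argument shows $rs-\phi(s)=\Phi(r)$, since on $[\phi'(s^-),\phi'(s)]$ the function $\psi$ is constant equal to $s$. This proves (2), and then (3) is exactly the equality case rewritten.

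\textbf{Item (4).} Young's inequality gives $\phi^{**}\le\phi$. Choosing $r=\phi'(t)$ and using (3) gives $t\phi'(t)-\phi^*(\phi'(t))=\phi(t)$, so the supremum is attained and $\phi^{**}=\phi$.
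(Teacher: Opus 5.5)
Your proof is correct and follows essentially the argument the paper has in mind: the paper gives no written proof, but defers to the cited literature and illustrates precisely your area (Young's inequality) argument in Figure~\ref{fig:NfunctionsIntro}. Only cosmetic fixes are needed, none of which affects the measure-zero conclusions: $u<\phi'(t)$ yields only $t\geq\psi(u^-)$ rather than a strict inequality, the boundary case $u=\phi'(t)$ lies on a null graph, and $\psi\equiv s$ holds on the half-open interval $[\phi'(s^-),\phi'(s))$ rather than the closed one.
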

We set the dual energy
\begin{align*}
J^*(\tau) \coloneqq \sum_{\alpha\in\cA} w_\alpha\,\phi^*\big(w_\alpha^{-1}|\tau_\alpha|\big) - \tau^\top B g \qquad\text{for all }\tau \in \mathbb{R}^M.
\end{align*}
Moreover, the dual space reads
\begin{align*}
\Sigma_f & \coloneqq  \lbrace \tau\in\R^M\colon \tau^\top B v =f \cdot v\ \text{for all } v\in V_0\rbrace.
\end{align*}
We set the minimizer $u_g = u + g \in \mathbb{R}^N$ and $\sigma \in \mathbb{R}^M$ with 
\begin{align}\label{eq:Minimizers}
u = \argmin_{v \in V_0} J(v + g)\qquad \text{and}\qquad \sigma = \argmin_{\tau \in \Sigma_f } J^*(\tau).
\end{align} 
We define the function 
\begin{align}\label{eq:DefFenchelConjugate}
A_\phi^* (t) \coloneqq \begin{cases}
\tfrac{(\phi^*)'(|t|)}{|t|}t&\text{for }t \neq 0,\\
0 & \text{for }t = 0.
\end{cases}
\end{align}
\begin{theorem}[Duality]\label{thm:Duality}
Let $\phi$ be an N-function and suppose that $\ker(B) \cap V_0 = \lbrace 0 \rbrace$. Then we have the identity
\begin{align*}
J(u_g) + f\cdot g = \min_{v \in V_0} J(v + g)+ f\cdot g = - \min_{\tau \in \Sigma_f} J^*(\tau) = - J^*(\sigma).
\end{align*}
Moreover, the minimizers $u_g$ and $\sigma$ satisfy for all $\alpha \in \cA$
\begin{align*}
\sigma_\alpha &= w_\alpha\,A_\phi\big((Bu_g)_\alpha\big)\qquad\text{and}\qquad
(Bu_g)_\alpha = A_\phi^*\big(w_\alpha^{-1}\sigma_\alpha\big).
\end{align*}
\end{theorem}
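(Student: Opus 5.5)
The plan is a direct Fenchel-type argument: prove weak duality via the Fenchel--Young inequality, then exhibit a $\tau$ from the primal minimizer that attains equality.

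\emph{Step 1 (weak duality).} Fix $v\in V_0$ and $\tau\in\Sigma_f$. For each $\alpha\in\cA$ the definition of $\phi^*$ gives the Fenchel--Young inequality
\begin{align*}
|\tau_\alpha|\,|(B(v+g))_\alpha| \le w_\alpha\phi(|(B(v+g))_\alpha|) + w_\alpha\phi^*(w_\alpha^{-1}|\tau_\alpha|).
\end{align*}
This follows by applying $rs\le\phi(s)+\phi^*(r)$ with $r=w_\alpha^{-1}|\tau_\alpha|$ and multiplying by $w_\alpha$. Summing over $\alpha$ and using $\tau^\top B(v+g)\le\sum_\alpha|\tau_\alpha||(B(v+g))_\alpha|$, together with $\tau^\top Bv=f\cdot v$ (since $\tau\in\Sigma_f$), yields
\begin{align*}
f\cdot v+\tau^\top Bg\le \sum_\alpha w_\alpha\phi(|(B(v+g))_\alpha|)+\sum_\alpha w_\alpha\phi^*(w_\alpha^{-1}|\tau_\alpha|).
\end{align*}
Rearranged, this reads $J(v+g)+f\cdot g\ge -J^*(\tau)$. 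Hence $\inf_{V_0} J(\cdot+g)+f\cdot g\ge -\inf_{\Sigma_f}J^*$.

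\emph{Step 2 (strong duality and optimality relations).} Since $\ker(B)\cap V_0=\{0\}$, the primal minimizer $u$ exists, and \eqref{eq:var-abstract} holds. One subtlety is that $\phi'$ is only right-continuous, so $\phi$ need not be differentiable. In that case I would either assume the variational equation as stated in the paper or replace it by the subdifferential inclusion. Define $\tilde\sigma_\alpha\coloneqq w_\alpha A_\phi((Bu_g)_\alpha)$. Then \eqref{eq:var-abstract} says exactly that $\tilde\sigma^\top Bv=f\cdot v$ for all $v\in V_0$, so $\tilde\sigma\in\Sigma_f$.

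Next I would check that Fenchel--Young holds with equality for each $\alpha$. Set $t=|(Bu_g)_\alpha|$, so that $w_\alpha^{-1}|\tilde\sigma_\alpha|=\phi'(t)$. Proposition~\ref{prop:Conjugate}\eqref{itm:ConjugateC} gives $\phi^*(\phi'(t))=\phi'(t)t-\phi(t)$. In addition, $\tilde\sigma_\alpha (Bu_g)_\alpha=w_\alpha\phi'(t)t=|\tilde\sigma_\alpha||(Bu_g)_\alpha|$, so the sign inequality is also an equality. Summing then gives $J(u_g)+f\cdot g=-J^*(\tilde\sigma)$. Combined with Step 1, this shows that $\tilde\sigma$ minimizes $J^*$ over $\Sigma_f$ and that the min identity holds.

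\emph{Step 3 (uniqueness and inverse relation).} To identify $\tilde\sigma=\sigma$, I need uniqueness of the dual minimizer. I would argue it as follows. Any dual minimizer $\tau$ must attain equality in Step 1 with $v=u$. This forces $\tau_\alpha$ to have the same sign as $(Bu_g)_\alpha$, and forces $w_\alpha^{-1}|\tau_\alpha|\in\partial\phi(t)=[\phi'(t^-),\phi'(t)]$. If $\phi'$ has jumps, this is where uniqueness could fail. I expect this to be the main obstacle. Strict convexity of $\phi^*$ holds because $(\phi^*)'=(\phi')^{-1}$ is strictly increasing, since $\phi'$ is finite and positive. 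That gives uniqueness of the dual minimizer over the affine set $\Sigma_f$ directly, which settles the issue.

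Finally, $(Bu_g)_\alpha=A^*_\phi(w_\alpha^{-1}\sigma_\alpha)$ follows from $(\phi^*)'(\phi'(t))=(\phi')^{-1}(\phi'(t))=t$. This identity uses Proposition~\ref{prop:Conjugate}\eqref{itm:DualDerivativeEqual}, with some care at plateaus of $\phi'$ (where the right-continuous inverse returns the right endpoint), together with matching signs. The case $t=0$ is trivial, since both sides vanish.
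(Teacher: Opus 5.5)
\textbf{There is a genuine gap in Step~3, and it cannot be closed without an extra hypothesis.}

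Your route is the classical one: weak duality from Fenchel--Young, then equality by building $\tilde\sigma_\alpha=w_\alpha A_\phi((Bu_g)_\alpha)$ from \eqref{eq:var-abstract} together with Proposition~\ref{prop:Conjugate}~\ref{itm:ConjugateC}. The paper's proof is only the citation \cite[Thm.~2.3]{DieningStorn25}, and Steps~1 and~2 are correct once \eqref{eq:var-abstract} is granted.

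The failure is your claim that $(\phi^*)'=(\phi')^{-1}$ is strictly increasing ``since $\phi'$ is finite and positive''. This is false. If $\phi'$ jumps at $t_0$, then $(\phi')^{-1}\equiv t_0$ on $[\phi'(t_0^-),\phi'(t_0))$, so $\phi^*$ is affine there. Strict convexity of $\phi^*$ holds if and only if $\phi'$ is continuous. So your argument fails exactly in the case you flagged as the obstacle, and there dual uniqueness genuinely fails, even when \eqref{eq:var-abstract} holds. A counterexample:
\begin{itemize}
\item Take $\phi'(t)=t$ for $t<1$ and $\phi'(t)=t+1$ for $t\geq 1$, with $w=(1,1)$, $f=0$, $Bv=(v_1,v_1-v_2)$ and the constraint $v_2=2$. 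Then $V_0=\{(v_1,0)\}$ and $g=(0,2)$.
\item The unique primal minimizer is $u=(1,0)$ with $Bu_g=(1,-1)$, and \eqref{eq:var-abstract} holds since $\phi'(1)-\phi'(1)=0$.
\item However, $\Sigma_f=\{\tau_1+\tau_2=0\}$, and for $\tau=(s,-s)$ one gets $J^*(\tau)=2\phi^*(s)-2s$. This is constant for $s\in[1,2]$, so every such $\tau$ is a dual minimizer.
\item Only $s=2$ satisfies $\sigma_\alpha=w_\alpha A_\phi((Bu_g)_\alpha)$.
\end{itemize}

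The ``care at plateaus'' in your last step cannot be supplied either. The identity $(\phi')^{-1}(\phi'(t))=t$ fails whenever $\phi'$ is constant on some $[t,t+\varepsilon]$. For instance, let $\phi'\equiv c$ on $[1,2]$ and take $N=M=1$, $B=1$, $V_0=\mathbb{R}$, $g=0$, $f=c$. Then every $u\in[1,2]$ is a minimizer and $\sigma=c$, yet $A_\phi^*(c)=2$.

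\textbf{What the statement actually needs.} The pointwise relations, the uniqueness implicit in writing $u_g$ and $\sigma$ as argmins, and \eqref{eq:var-abstract} itself all require $\phi'$ to be continuous and strictly increasing. The variational equation fails at jumps: with the first $\phi$ above, $N=M=1$, $B=1$ and $f=\tfrac32$, one has $u=1$ but $\phi'(1)=2\neq\tfrac32$. The paper assumes this regularity tacitly. It holds under uniform convexity (Definition~\ref{def:UniformConvexity}), and for $\phi(t)=t^p/p$ and its regularizations $\phi_\delta$.

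You should state it as a hypothesis rather than try to derive it. With it, your proof is complete:
\begin{itemize}
\item \eqref{eq:var-abstract} is the genuine Euler--Lagrange equation.
\item $(\phi')^{-1}$ is strictly increasing because $\phi'$ is continuous. Hence $\phi^*$ is strictly convex, the dual minimizer is unique, and $\tilde\sigma=\sigma$.
\item $(\phi^*)'(\phi'(t))=t$ is Proposition~\ref{prop:Conjugate}~\ref{itm:inverse}.
\end{itemize}

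Without the hypothesis, only the energy identity survives. Your Step~2 still proves it if you take $\tilde\sigma$ from the subdifferential inclusion, using the chain rule for the finite convex function $y\mapsto\sum_\alpha w_\alpha\phi(|y_\alpha|)$. This works because Fenchel--Young is an equality for every subgradient of $\phi$ at $t$.
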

\begin{proof}
This is a classical result. A proof that fits well to our notation can be found in \cite[Thm.~2.3]{DieningStorn25}.
\end{proof}
We set the space 
\begin{align*}
\Sigma_0 \coloneqq \lbrace \tau\in\mathbb{R}^M\colon \tau^\top B v =0 \text{ for all } v \in V_0 \rbrace.
\end{align*}
Differentiating $J^*$ leads to the equivalent characterization of the minimizer $\sigma \in \Sigma_f$ as solution to the variational problem
\begin{align*}
\sum_{\alpha\in\cA} A_\phi^* \big(w_\alpha^{-1} \sigma_\alpha\big) \tau_\alpha &= \tau^\top Bg \qquad \text{for all }\tau\in\Sigma_0.
\end{align*} 
Using a Lagrange multiplier $\tilde u \in V_0$, we can rewrite the problem as the saddle-point system: Seek $\sigma \in \mathbb{R}^M$ and $\tilde u \in V_0$ such that
\begin{align}\label{eq:dualSaddle}
\begin{aligned}
\sum_{\alpha\in\cA} \big( A_\phi^* \big(w_\alpha^{-1} \sigma_\alpha\big) \tau_\alpha - (B(\tilde u+g))_\alpha \tau_\alpha \big) &=0 &&\text{for all }\tau\in\R^M,\\
\sigma^\top Bv &= f\cdot v &&\text{for all }v\in V_0.
\end{aligned}
\end{align}
Testing with the canonical basis vectors $\tau = e_\alpha \in \mathbb{R}^M$ yields for all $\alpha \in \cA$ the pointwise identity
\begin{align*}
A_\phi^*\big(w_\alpha^{-1}\sigma_\alpha\big) = (B(\tilde u+g))_\alpha.
\end{align*}
Hence, Theorem~\ref{thm:Duality} shows that the Lagrange multiplier equals the primal solution in the sense that $u_g = \tilde u + g$ and $u = \tilde u$.

\subsection{Dual IRLS}
We compute the solution to the saddle-point problem in \eqref{eq:dualSaddle} iteratively for each $n\in \mathbb{N}_0$ via the IRLS; that is, given $\sigma^{n} \in \mathbb{R}^M$, we seek the solution $\sigma^{n+1}\in \mathbb{R}^M$ and $u^{n+1}\in V_0$ to
\begin{align}\label{eq:dualKacSaddle}
\begin{aligned}
\sum_{\alpha\in\cA}
\frac{(\phi^*)'(w_\alpha^{-1}|\sigma^n_\alpha|)}{|\sigma^n_\alpha|} \sigma^{n+1}_\alpha \tau_\alpha
-\tau^\top B(u^{n+1}+g) &=0 &&\text{for all }\tau\in\mathbb{R}^M,\\
(\sigma^{n+1})^\top Bv &= f\cdot v &&\text{for all }v\in V_0.
\end{aligned}
\end{align}
Testing again pointwise leads for all $\alpha \in \cA$ to the identity 
\begin{align}\label{eq:aeadssafg}
\frac{(\phi^*)'(w_\alpha^{-1} |\sigma^{n}_\alpha|)}{|\sigma^{n}_\alpha|} \sigma^{n+1}_\alpha  =  (B(u^{n+1}+g))_\alpha. 
\end{align}
The weight $(\phi^*)'(w_\alpha^{-1} t)/t$ is positive for all $t > 0$. For the limiting case $t = 0$ we interpret the weight as the right-limit in the sense that 
\begin{align}\label{eq:Def4Zero}
\frac{(\phi^*)'(w_\alpha^{-1} 0)}{0} \coloneqq \lim_{t \searrow 0}\frac{(\phi^*)'(w_\alpha^{-1} t)}{t}. 
\end{align} 
Let us assume that $(\phi^*)'(w_\alpha^{-1} t)/t >0$ for all $t\geq 0$, which we will ensure later by suitable assumptions, see Theorem~\ref{thm:ConvOfDualIRLS} below. We can reformulate the identity in \eqref{eq:aeadssafg}, leading to
\begin{align}\label{eq:IDenitSigmaNone}
\sigma^{n+1}_\alpha = (B(u^{n+1}+g))_\alpha|\sigma^{n}_\alpha| /(\phi^*)'(w_\alpha^{-1} |\sigma^{n}_\alpha|).
\end{align}
Applying this identity to the second equation of \eqref{eq:dualKacSaddle} results with 
\begin{align*}
a_\alpha \coloneqq |\sigma^{n}_\alpha| /(\phi^*)'(w_\alpha^{-1} |\sigma^{n}_\alpha|) \qquad\text{for all }\alpha \in \cA
\end{align*} 
in the weighted least-squares problem: Seek $u^{n+1} \in V_0$ with 
\begin{align}\label{eq:DualKac}
\sum_{\alpha \in \cA} a_\alpha (B(u^{n+1}+g))_\alpha (Bv)_\alpha &= f\cdot v \qquad\text{for all }v\in V_0.
\end{align}
Having solved this equation, we obtain $\sigma^{n+1} \in \Sigma_f$ via the formula in \eqref{eq:IDenitSigmaNone}. 
This allows us to rewrite the dIRLS scheme in \eqref{eq:dualKacSaddle} in terms of the weighted primal least-squares problem \eqref{eq:DualKac}. 
\begin{remark}[Built-in error control]\label{rem:BuiltInErrorControl}
Since $\sigma^n \in \Sigma_f$ and $u^n \in \mathbb{R}^N$, the duality result in Theorem~\ref{thm:Duality} yields the computable guaranteed upper energy error bound
\begin{align*}
\max\lbrace J(u^n + g) - J(u_g), J^*(\sigma^n) - J^*(\sigma)\rbrace \leq J(u^n + g) + J^*(\sigma^n) + f\cdot g.
\end{align*}
\end{remark}
It remains to verify convergence of the dIRLS. We therefore adapt arguments from \cite{DieningFornasierTomasiWank20}, which exploit uniform convexity.
\begin{definition}[Uniform convexity]\label{def:UniformConvexity}
We call an N-function $\phi$ uniformly convex, if
\begin{align*}
\frac{\phi'(t)-\phi'(s)}{t-s} \eqsim \frac{\phi'(s)}{s}\qquad\text{for all }0 \leq t < s.
\end{align*}
\end{definition}
A slightly stronger assumption, which is for example the uniform convexity condition in \cite{DieningEttwein08}, is the following.
\begin{lemma}[Alternative characterization]
Suppose that the N-function $\phi$ is piecewise twice differentiable in the sense that $\phi \in W^{2,\infty}_\textup{loc}((0,\infty))$. Moreover, assume we have with constant $0<c\leq C < \infty$ the bound 
\begin{align}\label{eq:AltCharUnifConv}
c \leq \frac{\phi''(t)t}{\phi'(t)}  \leq C < \infty\qquad\text{for almost all }t\geq 0.
\end{align}
Then $\phi$ is uniformly convex. If in addition $1 \leq c$, we have superquadratic growth
\begin{align*}
\frac{\phi'(t)}{t} \leq \frac{\phi'(T)}{T}\qquad\text{for all }0< t \leq T.
\end{align*}
\end{lemma}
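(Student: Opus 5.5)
The idea is to work with the logarithmic derivative of $\phi'$. Set $\psi(t) \coloneqq \log \phi'(t)$ for $t>0$. Since $\phi\in W^{2,\infty}_\textup{loc}((0,\infty))$, the function $\phi'$ is locally Lipschitz on $(0,\infty)$ and positive there, so $\psi$ is locally absolutely continuous. By \eqref{eq:AltCharUnifConv} it satisfies $c/t \leq \psi'(t) \leq C/t$ almost everywhere. Integrating from $s$ to $t$ gives, for $0<s\leq t$,
\begin{align*}
\Big(\frac{t}{s}\Big)^{c} \leq \frac{\phi'(t)}{\phi'(s)} \leq \Big(\frac{t}{s}\Big)^{C}.
\end{align*}
In other words, $\phi'$ has power-type behaviour with exponents between $c$ and $C$. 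Both claims will be read off from this two-sided bound together with the integral representation $\phi'(t)-\phi'(s)=\int_s^t \phi''(r)\,\mathrm{d}r$.

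\textbf{Superquadratic growth.} If $c\geq 1$, the lower bound with $s=t$, $t=T$ gives $\phi'(T)/\phi'(t)\geq (T/t)^c \geq T/t$. This is exactly $\phi'(t)/t\leq \phi'(T)/T$. Alternatively, one notes that $(\phi'(t)/t)' = (\phi''(t)t-\phi'(t))/t^2\geq 0$ almost everywhere.

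\textbf{Uniform convexity.} Relabel so that $0\leq s<t$ (the definition writes $t<s$; the statement is symmetric up to relabeling). We must show
\begin{align*}
\frac{\phi'(t)-\phi'(s)}{t-s}\eqsim \frac{\phi'(t)}{t},
\end{align*}
where the right-hand side is evaluated at the larger argument. Write $\phi'(t)-\phi'(s)=\int_s^t \phi''(r)\,\mathrm{d}r$ and use $\phi''(r)\eqsim \phi'(r)/r$ from \eqref{eq:AltCharUnifConv}. It therefore suffices to show
\begin{align*}
\frac{1}{t-s}\int_s^t \frac{\phi'(r)}{r}\,\mathrm{d}r \eqsim \frac{\phi'(t)}{t}.
\end{align*}

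For the upper bound, $\phi'(r)/r\leq \phi'(t)\,r^{C-1}t^{-C}$. If $C\geq 1$, this is at most $\phi'(t)/t$ directly. If $C<1$, one argues instead with the concave-type bound $\phi'(t)-\phi'(s)\leq \phi'(t)(1-(s/t)^C)\leq \phi'(t)\,C'(t-s)/t$, using that $1-x^C\lesssim 1-x$ for $x\in[0,1]$.

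For the lower bound, restrict the integral to $r\in[\max(s,t/2),t]$. On this interval the power bounds give $\phi'(r)/r\gtrsim \phi'(t)/t$, with constants depending on $c$ and $C$. The interval has length at least $(t-s)/2$, which gives the bound.

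The case $s=0$ uses $\phi'(0)=0$. The required statement $\phi'(t)/t\eqsim\phi'(t)/t$ is then trivial.

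\textbf{Main obstacle.} The delicate point is the bookkeeping of the cases $s$ close to $t$ versus $s\ll t$, and the case $C<1$ in the upper bound. The inequality $1-x^\gamma\eqsim_\gamma 1-x$ on $[0,1]$ handles both uniformly, so I would base the whole estimate on the identity
\begin{align*}
\phi'(t)-\phi'(s)=\phi'(t)\Big(1-\frac{\phi'(s)}{\phi'(t)}\Big)
\end{align*}
combined with the two-sided power bound. This avoids the integral splitting altogether, since
\begin{align*}
1-\Big(\frac{s}{t}\Big)^{c}\leq 1-\frac{\phi'(s)}{\phi'(t)}\leq 1-\Big(\frac{s}{t}\Big)^{C},
\end{align*}
and both outer terms are comparable to $(t-s)/t$.
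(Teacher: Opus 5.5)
Your proof is correct, but for the first claim it takes a different route from the paper. The paper does not prove uniform convexity itself; it cites Lemma~30 of Diening--Fornasier--Tomasi--Wank. For superquadratic growth it differentiates $\psi(t)=\phi'(t)/t$ and uses $\psi'(t)=\frac{\phi'(t)}{t^2}\bigl(\frac{\phi''(t)t}{\phi'(t)}-1\bigr)\geq 0$, which is exactly your ``alternatively'' remark.

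Your main route integrates the logarithmic derivative of $\phi'$ once to obtain $(t/s)^c\leq \phi'(t)/\phi'(s)\leq (t/s)^C$. You then read both conclusions off this single estimate. Superquadratic growth follows from $(T/t)^c\geq T/t$. Uniform convexity follows from $1-(s/t)^c\leq 1-\phi'(s)/\phi'(t)\leq 1-(s/t)^C$ combined with $\min\{1,\gamma\}(1-x)\leq 1-x^\gamma\leq \max\{1,\gamma\}(1-x)$ on $[0,1]$. This gives a self-contained proof of uniform convexity with explicit constants $\min\{1,c\}$ and $\max\{1,C\}$. The paper's version is shorter only because it outsources that part.

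Two small points need fixing.

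First, in the integral-splitting paragraph you wrote $\phi'(r)/r\leq \phi'(t)\,r^{C-1}t^{-C}$ for $r\leq t$. The power bound actually gives $\phi'(r)\leq \phi'(t)(r/t)^c$; the exponent $C$ yields the \emph{lower} bound $\phi'(r)\geq\phi'(t)(r/t)^C$. So the case split there should be on $c$, not $C$. Your final argument via $\phi'(t)-\phi'(s)=\phi'(t)\bigl(1-\phi'(s)/\phi'(t)\bigr)$ never uses this step, so just delete that paragraph.

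Second, add one sentence noting that the right-continuous monotone $\phi'$ agrees everywhere on $(0,\infty)$ with the locally Lipschitz representative of the weak derivative; this follows by approximating from the right. Hence $\log\phi'$ is locally absolutely continuous, and integrating $c/t\leq(\log\phi')'\leq C/t$ is justified.
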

\begin{proof}
The first statement of the lemma is proven in \cite[Lem.~30]{DieningFornasierTomasiWank20}. To verify the superquadratic growth, we set
\begin{align*}
\psi(t)\coloneqq \frac{\phi'(t)}{t} \qquad\text{for all }t>0.
\end{align*}
Since $\phi\in W^{2,\infty}_{\mathrm{loc}}((0,\infty))$, we have $\phi'\in W^{1,\infty}_{\mathrm{loc}}((0,\infty))$ and hence $\psi\in W^{1,\infty}_{\mathrm{loc}}((0,\infty))$. We have for almost all $t>0$ the identity
\begin{align}\label{eq:tempisadaas}
\psi'(t) = \frac{\phi''(t)t-\phi'(t)}{t^2} = \frac{\phi'(t)}{t^2}
\left(\frac{\phi''(t)t}{\phi'(t)}-1\right).
\end{align}
If $1 \leq c$ in \eqref{eq:AltCharUnifConv}, the combination of \eqref{eq:tempisadaas} and $\phi'(s)>0$ for $s>0$ yields
\begin{align*}
0 \leq \psi'(t) \qquad\text{for almost all }t>0.
\end{align*}
Thus $\psi$ is non-decreasing on $(0,\infty)$, proving the claim.
\end{proof}
We can now state linear convergence of the dIRLS scheme. Note that the quadratic growth condition will be obtained for general integrands such as $\phi(t) = t^p/p$ via a regularization discussed in Section~\ref{subsec:Regularization} below. Moreover, in practical applications it is not important to use an initial iterate $\sigma^0 \in \Sigma_f$ -- any unconstrained $\sigma^0 \in \mathbb{R}^M$ can be used since the first iterate satisfies $\sigma^1 \in \Sigma_f$. The theorem below then applies with an index shift.
\begin{theorem}[Convergence of the dIRLS scheme]\label{thm:ConvOfDualIRLS}
Suppose that $\ker(B)\cap V_0=\lbrace 0\rbrace$.
Let $\phi$ be a uniformly convex N-function such that $\phi'(t)/t$ is non-decreasing; that is,
\begin{align}\label{eq:NonDecreasing}
\frac{\phi'(t)}{t} \leq \frac{\phi'(T)}{T}\qquad\text{for all }0 < t \leq T.
\end{align}
Moreover, we assume boundedness in the sense that
\begin{align}\label{eq:QuadGrowth}
0 < c_\phi \coloneqq \lim_{t \searrow 0} \frac{\phi'(t)}{t} \leq \lim_{T \to \infty} \frac{\phi'(T)}{T} \eqqcolon C_\phi  < \infty.
\end{align}
Finally, we assume that $\sigma^0 \in \Sigma_f$. Then the dIRLS scheme \eqref{eq:DualKac} with \eqref{eq:Def4Zero} is well-defined and converges linearly in the sense that the iterates $\sigma^n \in \Sigma_f$ with $n\in \mathbb{N}_0$ satisfy with some contraction factor $\kappa \coloneqq C_\textup{uc} c_\phi/ C_\phi$ with constant $C_\textup{uc} < \infty$ depending solely on the uniform convexity the estimate
\begin{align*}
J^*(\sigma^n) - J^*(\sigma) \leq (1-\kappa)^n\big(J^*(\sigma^0)-J^*(\sigma)\big).
\end{align*}
\end{theorem}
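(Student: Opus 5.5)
The plan is to transfer the properties of $\phi$ to the conjugate $\phi^*$, recognise the dIRLS as the \Kacanov{}/IRLS iteration for $J^*$ on $\Sigma_f$, and then run the energy-contraction argument of \cite{DieningFornasierTomasiWank20} for the sub-quadratic integrand $\psi_\alpha(t)\coloneqq w_\alpha\phi^*(w_\alpha^{-1}t)$.

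\textbf{Step 1: properties of $\phi^*$.} By Proposition~\ref{prop:Conjugate}, $(\phi^*)'=(\phi')^{-1}$. Since $\phi'(t)/t$ is non-decreasing with limits $c_\phi$ and $C_\phi$, the quotient $(\phi^*)'(r)/r$ is non-increasing, and
\begin{align*}
C_\phi^{-1}\le \frac{(\phi^*)'(r)}{r}\le c_\phi^{-1}\qquad\text{for all }r>0.
\end{align*}
Consequently the right limit in \eqref{eq:Def4Zero} exists and is bounded by $c_\phi^{-1}$. The weights $a_\alpha$ therefore satisfy $w_\alpha c_\phi\le a_\alpha\le w_\alpha C_\phi$; at $\sigma^n_\alpha=0$ we use the limit value. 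Together with $\ker(B)\cap V_0=\{0\}$, this makes \eqref{eq:DualKac} a symmetric positive definite problem on $V_0$. Hence $u^{n+1}$ exists and is unique, and \eqref{eq:IDenitSigmaNone} defines $\sigma^{n+1}\in\Sigma_f$. This settles well-posedness. Uniform convexity of $\phi$ passes to $\phi^*$ by the standard inverse-function argument (cf.\ \cite{DieningEttwein08}).

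\textbf{Step 2: the iteration as a majorize--minimize scheme.} Set
\begin{align*}
\mathcal J(\tau,\sigma^n)\coloneqq\sum_\alpha\Bigl(\tfrac12\,\omega_\alpha^n\,\tau_\alpha^2+\bigl(\psi_\alpha(|\sigma_\alpha^n|)-\tfrac12\,\omega^n_\alpha|\sigma_\alpha^n|^2\bigr)\Bigr)-\tau^\top Bg,
\qquad \omega^n_\alpha\coloneqq\frac{(\phi^*)'(w_\alpha^{-1}|\sigma^n_\alpha|)}{|\sigma^n_\alpha|}.
\end{align*}
By \eqref{eq:dualKacSaddle}, $\sigma^{n+1}$ is the minimizer of $\tau\mapsto\mathcal J(\tau,\sigma^n)$ over $\Sigma_f$. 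Because $\psi_\alpha'(t)/t$ is non-increasing, the function $s\mapsto\psi_\alpha(\sqrt s)$ is concave. Its tangent bound gives $J^*(\tau)\le\mathcal J(\tau,\sigma^n)$, with equality at $\tau=\sigma^n$. This yields monotonicity, and more precisely
\begin{align*}
J^*(\sigma^n)-J^*(\sigma^{n+1})\ge \mathcal J(\sigma^n,\sigma^n)-\mathcal J(\sigma^{n+1},\sigma^n)=\tfrac12\sum_\alpha\omega^n_\alpha|\sigma^n_\alpha-\sigma^{n+1}_\alpha|^2 .
\end{align*}
The equality uses the Galerkin orthogonality of the quadratic minimization over the affine space $\Sigma_f$.

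\textbf{Step 3: comparing against the optimum (main obstacle).} The remaining task is to show
\begin{align*}
J^*(\sigma^n)-J^*(\sigma)\lesssim \frac{C_\phi}{c_\phi}\sum_\alpha\omega^n_\alpha|\sigma^n_\alpha-\sigma^{n+1}_\alpha|^2 .
\end{align*}
Rearranging then gives the contraction with $\kappa=C_{\mathrm{uc}}c_\phi/C_\phi$.

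To prove it, I would first bound $J^*(\sigma^n)-J^*(\sigma)$ by convexity of $J^*$. Uniform convexity gives $J^*(\sigma^n)-J^*(\sigma)\lesssim\sum_\alpha\omega^n_\alpha|\sigma^n_\alpha-\sigma_\alpha|^2$, since the shifted-N-function quantity $(\phi^*)_{|\sigma^n|}(|\sigma^n-\sigma|)$ is comparable to this quadratic form up to the ratio of the bounds in Step~1.

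Next I would use the optimality of $\sigma^{n+1}$ in the $\omega^n$-weighted norm. The difference $\sigma^n-\sigma$ lies in $\Sigma_0$, and $\sigma^{n+1}$ is the $\omega^n$-orthogonal projection-type solution. The point is to control $\sigma^n-\sigma$ by $\sigma^n-\sigma^{n+1}$ through the residual of the Euler--Lagrange equation of $J^*$ at $\sigma^n$, tested with $\sigma^n-\sigma$. That residual equals $\sum_\alpha \omega^n_\alpha(\sigma^n_\alpha-\sigma^{n+1}_\alpha)\tau_\alpha$ for $\tau\in\Sigma_0$. A Cauchy--Schwarz inequality in the $\omega^n$ inner product then closes the estimate with Young's inequality.

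The delicate point is that the weights $\omega^n$ vary. Uniform convexity alone controls the quotient only locally, so the global comparison factor $C_\phi/c_\phi$ enters exactly at this step, as in \cite[Thm.~16]{DieningFornasierTomasiWank20}. I would follow that proof line by line with $\phi^*$ in place of $\phi$.
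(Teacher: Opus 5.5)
Your Steps 1 and 2 match the paper's proof: Lemma~\ref{lem:equiAssumtpion} gives the weight bounds and well-posedness, and the majorization of Lemma~\ref{lem:weigthedQuadEnergy} gives $J^*(\sigma^n)-J^*(\sigma^{n+1})\ge\frac12\sum_\alpha\omega^n_\alpha|\sigma^n_\alpha-\sigma^{n+1}_\alpha|^2$. The mechanism of Step~3 (convexity, the residual identity, Cauchy--Schwarz and Young) is also the paper's. However, the one inequality you display in Step~3 points the wrong way for that mechanism. Set $X\coloneqq\sum_\alpha\omega^n_\alpha|\sigma^n_\alpha-\sigma_\alpha|^2$ and $Y\coloneqq\sum_\alpha\omega^n_\alpha|\sigma^n_\alpha-\sigma^{n+1}_\alpha|^2$. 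Convexity and the residual identity give
\[
J^*(\sigma^n)-J^*(\sigma)\le\sum_{\alpha\in\cA}\omega^n_\alpha(\sigma^n_\alpha-\sigma^{n+1}_\alpha)(\sigma^n_\alpha-\sigma_\alpha)\le\frac{1}{2\gamma}Y+\frac{\gamma}{2}X .
\]
To absorb the $X$-term you need $X\lesssim\frac{C_\phi}{c_\phi}\bigl(J^*(\sigma^n)-J^*(\sigma)\bigr)$. The bound you wrote, $J^*(\sigma^n)-J^*(\sigma)\lesssim X$, is true even without any ratio: the weight at $|\sigma^n_\alpha|\vee|\sigma_\alpha|$ is at most $\omega^n_\alpha$ because $(\phi^*)'(t)/t$ is non-increasing. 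But it cannot close Young's inequality. The needed direction is exactly where $C_\phi/c_\phi$ enters:
\[
\omega^n_\alpha\le w_\alpha^{-1}c_\phi^{-1}\le\frac{C_\phi}{c_\phi}\,\frac{(\phi^*)'\bigl(w_\alpha^{-1}(|\sigma^n_\alpha|\vee|\sigma_\alpha|)\bigr)}{|\sigma^n_\alpha|\vee|\sigma_\alpha|}.
\]
Combine this with the pointwise equivalence and upper bound of Lemma~\ref{lem:natDist},
\[
\sum_{\alpha\in\cA}\frac{(\phi^*)'\bigl(w_\alpha^{-1}(|\sigma^n_\alpha|\vee|\sigma_\alpha|)\bigr)}{|\sigma^n_\alpha|\vee|\sigma_\alpha|}|\sigma^n_\alpha-\sigma_\alpha|^2\lesssim J^*(\sigma^n)-J^*(\sigma).
\]
This is \eqref{eq:Dasdsada} in the paper.

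Your remark that the shifted quantity is comparable to the quadratic form ``up to the ratio'' contains this, so the fix is to state and use that direction. Be careful with the bookkeeping you sketch afterwards, though. Suppose you chain $J^*(\sigma^n)-J^*(\sigma)\lesssim X$ with a bound of $X$ by $Y$ from the residual, namely $\frac{c_\phi}{C_\phi}X\lesssim\sum_\alpha\omega^n_\alpha(\sigma^n_\alpha-\sigma^{n+1}_\alpha)(\sigma^n_\alpha-\sigma_\alpha)\le\sqrt{XY}$. Then you only get $X\lesssim(C_\phi/c_\phi)^2Y$, hence a contraction factor of order $(c_\phi/C_\phi)^2$ rather than the claimed $\kappa\eqsim c_\phi/C_\phi$. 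Closing Young's inequality directly in the energy error, as above, gives the linear dependence.
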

Before we verify the theorem, let us equivalently reformulate the assumptions in terms of the convex conjugate $\phi^*$. 
\begin{lemma}[Equivalent assumptions]\label{lem:equiAssumtpion}
Let $\phi$ be an N-function with convex conjugate $\phi^*$. Then the monotonicity condition in \eqref{eq:NonDecreasing} is satisfied if and only if $(\phi^*)'(t)/t$ is non-increasing; that is, 
\begin{align}\label{eq:non-decreasing}
\frac{(\phi^*)'(t)}{t} \geq \frac{(\phi^*)'(T)}{T}\qquad\text{for all }0< t \leq T.
\end{align}
Moreover, the boundedness in \eqref{eq:QuadGrowth} is satisfied if and only if additionally
\begin{align*}
0 < C_\phi^{-1} = \lim_{T \to \infty} \frac{(\phi^*)'(T)}{T} \leq \lim_{t \searrow 0} \frac{(\phi^*)'(t)}{t} = c_\phi^{-1} < \infty.
\end{align*}
\end{lemma}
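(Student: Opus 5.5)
The plan is to work with the right-continuous inverse $(\phi')^{-1}$ from Proposition~\ref{prop:Conjugate}, which by item~\ref{itm:DualDerivativeEqual} is the derivative $(\phi^*)'$. I will first establish a pointwise correspondence between the two quotients and then read off both claims.

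\emph{Step 1: the correspondence.} Set $\psi(t)\coloneqq \phi'(t)/t$ and $\chi(r)\coloneqq (\phi^*)'(r)/r$. For $r=\phi'(t)$ with $t>0$ in the range of $\phi'$, we have $(\phi')^{-1}(r)\ge t$. If $\phi'$ is constant on an interval, $(\phi')^{-1}(r)$ is its right endpoint. Hence
\begin{align*}
\chi(\phi'(t)) \;\ge\; \frac{t}{\phi'(t)} = \frac{1}{\psi(t)} ,
\end{align*}
and the two sides agree whenever $t$ is the right endpoint of a level set of $\phi'$. In the strictly increasing case, item~\ref{itm:inverse} gives exact equality, so $\chi(\phi'(t)) = 1/\psi(t)$. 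Since $\phi'$ is non-decreasing with $\phi'(t)\to\infty$, the map $t\mapsto\phi'(t)$ is an order-preserving way to parametrize $r\in(0,\infty)$, up to jumps and flat pieces.

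\emph{Step 2: the monotonicity equivalence.} Assume \eqref{eq:NonDecreasing}. Then $\phi'$ has no flat pieces except possibly near $0$, but $\phi'(0)=0$ and $\phi'>0$ on $(0,\infty)$ exclude even that, since $\psi$ non-decreasing and $\phi'$ constant on $[a,b]$ with $a>0$ is impossible. So $\phi'$ is strictly increasing and $(\phi^*)'$ is its genuine inverse, extended constantly across jumps of $\phi'$. On the range of $\phi'$, the function $\chi = 1/(\psi\circ(\phi')^{-1})$ is a composition of a non-decreasing map with a non-decreasing map, then reciprocated, hence non-increasing. On a jump gap $(\phi'(t^-),\phi'(t))$ the inverse is constant equal to $t$, so $\chi(r)=t/r$ is decreasing there. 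Gluing these pieces by one-sided inequalities at the gap endpoints gives \eqref{eq:non-decreasing}.

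The converse follows by symmetry. Since $\phi^*$ is an N-function and $\phi^{**}=\phi$ by item~\ref{itm:biConjugate}, applying the same argument to $\phi^*$ with the inequalities reversed, that is non-increasing instead of non-decreasing, yields \eqref{eq:NonDecreasing}. The main obstacle I expect is this careful handling of jumps and flat pieces of $\phi'$ together with the right-continuous inverse. I will treat it by comparing $\psi$ and $\chi$ only via the inequality in Step~1 and the identity $(\phi')^{-1}(\phi'(t))=t$ for strictly increasing $\phi'$. For the reverse direction, flat pieces of $(\phi^*)'$ are ruled out by the same positivity argument.

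\emph{Step 3: the limits.} Under the monotonicity, the limits in \eqref{eq:QuadGrowth} exist in $[0,\infty]$. As $t\searrow0$ we have $r=\phi'(t)\searrow0$, because $\phi'$ is right-continuous with $\phi'(0)=0$. As $T\to\infty$ we have $r\to\infty$. Consequently $\lim_{r\searrow0}\chi(r) = 1/\lim_{t\searrow0}\psi(t)=c_\phi^{-1}$ and $\lim_{r\to\infty}\chi(r)=C_\phi^{-1}$. Jump gaps do not affect these limits, since on such a gap $t/r$ lies between the values of $1/\psi$ at its endpoints. Therefore $0<c_\phi\le C_\phi<\infty$ holds if and only if $0<C_\phi^{-1}\le c_\phi^{-1}<\infty$, which proves the boundedness equivalence.
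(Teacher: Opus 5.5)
Your forward direction uses the same substitution $s=\phi'(t)$, $(\phi^*)'=(\phi')^{-1}$ as the paper, and is actually more careful. The paper only checks \eqref{eq:non-decreasing} for $s,S$ in the range of $\phi'$. Your treatment of the jump gaps, where $(\phi^*)'\equiv t$ and $\chi(r)=t/r$, covers every $r>0$; the gluing only needs $\psi(s)\le \phi'(t^-)/t$ for $s<t$, which follows from the monotonicity of $\psi$.

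The converse, however, rests on a false claim: flat pieces of $(\phi^*)'$ are \emph{not} ruled out by \eqref{eq:non-decreasing}. If $(\phi^*)'\equiv c>0$ on $[a,b]$, then $\chi(a)=c/a>c/b=\chi(b)$. That is exactly what a non-increasing $\chi$ permits, so the positivity argument yields no contradiction. Your own Step~2 produces such flats, since a jump of $\phi'$ at $t$ gives $(\phi^*)'\equiv t$ on $[\phi'(t^-),\phi'(t))$. For example, $\phi'(t)=t$ on $[0,1)$ and $\phi'(t)=2t$ on $[1,\infty)$ satisfies \eqref{eq:NonDecreasing}, while $(\phi^*)'(r)$ equals $r$, $1$, $r/2$ on $[0,1)$, $[1,2)$, $[2,\infty)$. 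Passing from $\phi$ to $\phi^*$ interchanges jumps and flat pieces. So the first step of your forward argument does not dualize, and ``the same argument with the inequalities reversed'' is not available as stated. The paper's one-line ``by the same steps'' glosses over the same point.

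The repair is short. What \eqref{eq:non-decreasing} does exclude are jumps of $(\phi^*)'$. If $(\phi^*)'(r_0^-)<(\phi^*)'(r_0)$, letting $r\nearrow r_0$ in $(\phi^*)'(r)/r\ge(\phi^*)'(r_0)/r_0$ gives $(\phi^*)'(r_0^-)\ge(\phi^*)'(r_0)$, a contradiction. A flat piece $\phi'\equiv c$ on $[a,b]$ with $a<b$ (so $c>0$) would create exactly such a jump at $c$, because $(\phi^*)'(r)\le a$ for $r<c$ and $(\phi^*)'(c)\ge b$. Hence $\phi'$ is strictly increasing and $(\phi')^{-1}(\phi'(t))=t$. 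Your Step~1 inequality then becomes the identity $\psi(t)=1/\chi(\phi'(t))$ for all $t>0$. So $\psi$ is non-decreasing, because $\phi'$ is non-decreasing and $\chi$ is non-increasing.

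With this fix, either monotonicity condition yields strict monotonicity of $\phi'$. That is what your Step~3 needs for $\chi(\phi'(t))=1/\psi(t)$, and the limit identities there then give both directions of the boundedness equivalence at once.
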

\begin{proof}
\textit{Step 1 (Monotonicity).}
Suppose that \eqref{eq:NonDecreasing} is satisfied.
Let $0 < t\leq T$ and set $s\coloneqq \phi'(t)$ and $S\coloneqq\phi'(T)$. Since $\phi'$ is non-decreasing (Definition~\ref{def:NfunctionIntro}) we obtain $0<s\leq S$. Moreover, we can equivalently rewrite
\begin{align*}
\frac{\phi'(t)}{t} \leq \frac{\phi'(T)}{T} \qquad\text{ as }\qquad \frac{t}{s}\geq \frac{T}{S}.
\end{align*}
Since Proposition~\ref{prop:Conjugate}~\ref{itm:DualDerivativeEqual} yields the identity $(\phi^*)' = (\phi')^{-1}$, where $(\phi')^{-1}$ is the inverse of $\phi'$ according to Proposition~\ref{prop:Conjugate}~\ref{itm:inverse} with strict monotonicity of $\phi'$ following by \eqref{eq:NonDecreasing},  we obtain $t = (\phi')^{-1}(s) =  (\phi^*)'(s)$ and $T=(\phi')^{-1}(S) = (\phi^*)'(S)$. This leads to the claimed non-increasing property
\begin{align*}
\frac{(\phi^*)'(s)}{s} \geq \frac{(\phi^*)'(S)}{S}.
\end{align*}
The reverse implication follows by the same steps.

\textit{Step 2 (Boundedness).}
Assume first that \eqref{eq:QuadGrowth} holds.
Since $t \mapsto \phi'(t)/t$ is non-decreasing by \eqref{eq:NonDecreasing}, we obtain $c_\phi \leq \phi'(t)/t \leq C_\phi$ and, equivalently, 
\begin{align*}
C_\phi^{-1} \phi'(t) \leq t \leq c_\phi^{-1} \phi'(t) \qquad\text{for all }t>0.
\end{align*}
By Step~1, the monotonicity condition \eqref{eq:NonDecreasing} implies that $\phi'$ is strictly increasing on $(0,\infty)$. Hence $(\phi')^{-1}$ is the actual inverse of $\phi'$, and Proposition~\ref{prop:Conjugate}~\ref{itm:DualDerivativeEqual} yields $(\phi^*)'=(\phi')^{-1}$. 
Let $s>0$ and $r\coloneqq (\phi^*)'(s)=(\phi')^{-1}(s)$.
We have $s=\phi'(r)$ and 
\begin{align*}
C_\phi^{-1}s=C_\phi^{-1}\phi'(r)\leq r\leq c_\phi^{-1}\phi'(r)=c_\phi^{-1}s.
\end{align*}
Dividing by $s>0$, we arrive at
\begin{align}\label{eq:asdsagggg}
C_\phi^{-1}\leq \frac{(\phi^*)'(s)}{s}\leq c_\phi^{-1}
\qquad\text{for all }s>0.
\end{align}
Since, by Step~1, the function $s\mapsto (\phi^*)'(s)/s$ is non-increasing, the one-sided limits at $0$ and $\infty$ exist. Moreover, if $t \searrow 0$ and $s \coloneqq \phi'(t)$, then $s \searrow 0$ and
\begin{align*}
\frac{(\phi^*)'(s)}{s} = \frac{(\phi')^{-1}(s)}{s} = \frac{t}{\phi'(t)} = \left(\frac{\phi'(t)}{t}\right)^{-1}
\to c_\phi^{-1}.
\end{align*}
This yields
\begin{align*}
\lim_{s\searrow 0}\frac{(\phi^*)'(s)}{s}=c_\phi^{-1}.
\end{align*}
Similarly, we obtain 
\begin{align*}
\lim_{s\to\infty}\frac{(\phi^*)'(s)}{s}=C_\phi^{-1}.
\end{align*}
This shows one implication. The opposite implication follows by the same arguments due to the property $\phi^{**} = \phi$.
\end{proof}
The condition in \eqref{eq:non-decreasing} is crucial, since it enables the following observation involving the relaxed energy defined for all $\tau,\chi \in \mathbb{R}^M$ as
\begin{align}\label{eq:DefRelaxedJ}
\begin{aligned}
J^*(\tau,\chi) &\coloneqq \sum_{\alpha \in \cA} \Big( \frac{1}{2}  \frac{(\phi^*)'(w_\alpha^{-1}|\chi_\alpha|)}{|\chi_\alpha|} |\tau_\alpha|^2 - |\chi_\alpha|\frac{(\phi^*)'(w_\alpha^{-1} |\chi_\alpha|)}{2} \\
&\qquad\qquad +w_\alpha \phi^*(w_\alpha^{-1}|\chi_\alpha|)  \Big) - \tau^\top Bg.
\end{aligned}
\end{align}
\begin{lemma}[Weighted quadratic energy]\label{lem:weigthedQuadEnergy}
Suppose $\phi^*$ satisfies the monotonicity condition in \eqref{eq:non-decreasing}. 
Then we have the minimization property
\begin{align*}
J^*(\tau) = J^*(\tau,\tau) \leq J^*(\tau,\chi)\qquad\text{for all }\tau,\chi \in \mathbb{R}^M.
\end{align*}
\end{lemma}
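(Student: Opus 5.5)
The proof reduces to a one-dimensional inequality for each index $\alpha\in\cA$. The linear term $-\tau^\top Bg$ appears identically in $J^*(\tau)$ and $J^*(\tau,\chi)$, so it cancels. It therefore suffices to show, for fixed $w=w_\alpha>0$ and scalars $t=|\tau_\alpha|\ge 0$, $s=|\chi_\alpha|\ge 0$, that
\begin{align*}
w\,\phi^*(w^{-1}t) \leq \frac12 \frac{(\phi^*)'(w^{-1}s)}{s}\,t^2 - \frac{s}{2}(\phi^*)'(w^{-1}s) + w\,\phi^*(w^{-1}s),
\end{align*}
with equality for $s=t$. For $s=0$ I interpret the weight via the right limit \eqref{eq:Def4Zero}, and the middle term vanishes because $(\phi^*)'(0)=0$.

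First I check the equality $J^*(\tau,\tau)=J^*(\tau)$. For $s=t>0$ the first two terms are $\tfrac12 t(\phi^*)'(w^{-1}t)-\tfrac12 t(\phi^*)'(w^{-1}t)=0$, leaving $w\phi^*(w^{-1}t)$. For $s=t=0$ both sides are zero, provided the limiting weight is finite; if it is infinite I adopt the convention $\infty\cdot 0=0$, or note that Lemma~\ref{lem:equiAssumtpion} gives finiteness under \eqref{eq:QuadGrowth}.

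For the inequality, I define $h(t)\coloneqq$ right-hand side minus left-hand side, viewed as a function of $t\ge0$ with $s$ fixed, and set $\omega\coloneqq (\phi^*)'(w^{-1}s)/s$. Then $h(s)=0$, and the fundamental theorem of calculus, using Proposition~\ref{prop:Conjugate}~\ref{itm:DualDerivativeEqual} for the absolutely continuous representation of $\phi^*$, gives
\begin{align*}
h(t)=\int_s^t \Big(\omega r - (\phi^*)'(w^{-1}r)\Big)\,\mathrm{d}r .
\end{align*}
The monotonicity \eqref{eq:non-decreasing} says $r\mapsto (\phi^*)'(w^{-1}r)/r$ is non-increasing. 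Rescaling by $w^{-1}$ preserves this, because $(\phi^*)'(w^{-1}r)/r = w^{-1}\,(\phi^*)'(w^{-1}r)/(w^{-1}r)$. Hence, for $r\ge s$, $(\phi^*)'(w^{-1}r)\le \omega r$, and the integrand is non-negative on $[s,t]$ when $t\ge s$. For $r\le s$ the reverse inequality holds, so the integrand is non-positive on $[t,s]$; since the integral then runs backwards, $h(t)\ge 0$ again. In both cases $h(t)\ge0$, which proves the claim, and summing over $\alpha$ gives the lemma.

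The main obstacle is bookkeeping rather than ideas. One issue is the degenerate case $s=0$, where the limit weight $\omega=\lim_{r\searrow0}(\phi^*)'(w^{-1}r)/r$ is the supremum of the non-increasing quotient; the same integral argument works with this $\omega$, giving $h(t)=\tfrac12\omega t^2-w\phi^*(w^{-1}t)\ge0$ directly. The other is that $(\phi^*)'$ is only right-continuous, which I handle by applying the integral representation with the right derivative almost everywhere.
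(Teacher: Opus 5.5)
Your proof is correct and takes essentially the same route as the paper. Both arguments reduce to a scalar inequality per index $\alpha$ and use the non-increasing quotient $(\phi^*)'(r)/r$ to show that the frozen-weight quadratic minus $\phi^*$ is minimized at the frozen point: the paper differentiates $F_t(s)=\tfrac12\psi(t)s^2-\phi^*(s)$ almost everywhere, while you write the same difference as $\int_s^t(\omega r-(\phi^*)'(w^{-1}r))\,\mathrm{d}r$. If anything, your version is slightly more careful: it uses the absolute continuity of $\phi^*$ from Proposition~\ref{prop:Conjugate} explicitly, and it treats the degenerate case $\chi_\alpha=0$ with the limiting weight \eqref{eq:Def4Zero}, which the paper passes over.
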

\begin{proof}
Set the function $\psi(t)\coloneqq (\phi^*)'(t)/t$ for all $t > 0$, which is non-increasing on $(0,\infty)$ by the assumption in \eqref{eq:non-decreasing}. We fix $t>0$ and define 
\begin{align*}
F_t(s)\coloneqq \frac12 \psi(t)s^2-\phi^*(s)\qquad\text{for all }s > 0.
\end{align*}
Since $(\phi^*)'$ increases monotonically by Proposition~\ref{prop:Conjugate}~\ref{itm:DualDerivativeEqual} and Definition~\ref{def:NfunctionIntro}, it is almost everywhere differentiable (Lebesgue's theorem on differentiability of monotone functions). Thus, differentiation yields for almost all $s>0$ that
\begin{align*}
F_t'(s)= \psi(t)s-(\phi^*)'(s) = s\big(\psi(t)-\psi(s)\big).
\end{align*}
Since $\psi$ is non-increasing, it follows that $F_t'(s)  \geq 0$ for $s > t$ and $F_t'(s) \leq 0$ for $s < t$.
Hence $F_t$ attains its minimum at $s=t$, and therefore
\begin{align*}
F_t(t) \leq F_t(s)\qquad\text{for all }s\geq 0.
\end{align*}
This shows that
\begin{align*}
\frac12\frac{(\phi^*)'(t)}{t} t^2-\phi^*(t)\leq \frac12 \frac{(\phi^*)'(t)}{t} s^2-
\phi^*(s)\qquad\text{for all }s,t\geq 0.
\end{align*}
We fix $\alpha\in\cA$ and apply this inequality with $s\coloneqq w_\alpha^{-1}|\tau_\alpha|$ and $t\coloneqq w_\alpha^{-1}|\chi_\alpha|$. Multiplying by $w_\alpha$ yields
\begin{align*}
w_\alpha \phi^*(w_\alpha^{-1}|\tau_\alpha|) &\leq
\frac12 w_\alpha
\frac{(\phi^*)'(w_\alpha^{-1}|\chi_\alpha|)}{w_\alpha^{-1}|\chi_\alpha|}
\bigl(w_\alpha^{-1}|\tau_\alpha|\bigr)^2 \\
&\qquad -\frac12\frac{(\phi^*)'(w_\alpha^{-1}|\chi_\alpha|)}{w_\alpha^{-1}|\chi_\alpha|} (w_\alpha^{-1}|\chi_\alpha|)^2
+w_\alpha \phi^*(w_\alpha^{-1}|\chi_\alpha|) \\
&= \frac12 \frac{(\phi^*)'(w_\alpha^{-1}|\chi_\alpha|)}{|\chi_\alpha|} |\tau_\alpha|^2
-|\chi_\alpha|\frac{(\phi^*)'(w_\alpha^{-1}|\chi_\alpha|)}{2} + w_\alpha \phi^*(w_\alpha^{-1}|\chi_\alpha|).
\end{align*}
Summing over all $\alpha\in\cA$ proves
\begin{align*}
J^*(\tau)\le J^*(\tau,\chi).
\end{align*}
The identity $J^*(\tau ) = J^*(\tau,\tau)$ follows by definition.
\end{proof}
A further auxiliary result is the following ``correct'' notion of distance that relies on the uniform convexity of $\phi$ and the function
\begin{align}\label{eq:DefV}
V_{\phi^*}(t) \coloneqq \begin{cases}
\sqrt{\frac{(\phi^*)'(|t|)}{|t|}}t&\text{for }t \neq 0,\\
0&\text{for }t = 0.
\end{cases}
\end{align}
\begin{lemma}[Natural distance]\label{lem:natDist}
Suppose that $\phi$ is uniformly convex.
The minimizer $\sigma \in \Sigma_f$ defined in \eqref{eq:Minimizers} and any $\tau \in \Sigma_f$ satisfy
\begin{align*}
J^*(\tau) - J^*(\sigma) \leq \sum_{\alpha \in \cA} \big( A^*_\phi(w_\alpha^{-1} \tau_\alpha ) - A^*_\phi(w_\alpha^{-1} \sigma_\alpha) \big)  (\tau_\alpha - \sigma_\alpha)  \lesssim J^*(\tau) - J^*(\sigma).
\end{align*}
The pointwise values are for all $\alpha \in \cA$ proportional to 
\begin{align*}
\big( A^*_\phi(w_\alpha^{-1} \tau_\alpha ) - A^*_\phi(w_\alpha^{-1} \sigma_\alpha) \big)  (\tau_\alpha - \sigma_\alpha) &\eqsim \frac{(\phi^*)'\big(w_\alpha^{-1} (|\sigma_\alpha| \vee |\tau_\alpha|)\big)}{|\sigma_\alpha| \vee |\tau_\alpha|} |\sigma_\alpha - \tau_\alpha|^2\\
&\eqsim w_\alpha\, |V_{\phi^*}(w_\alpha^{-1} \tau_\alpha) - V_{\phi^*}(w_\alpha^{-1} \sigma_\alpha)|^2.
\end{align*}
The hidden constants in the estimates depend solely on the hidden constants in Definition~\ref{def:UniformConvexity}. 
\end{lemma}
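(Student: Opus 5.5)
The plan is to reduce everything to pointwise statements about the scalar function $\phi^*$ and then sum over $\alpha$. There are two ingredients. The first is a convexity sandwich for the functional $J^*$ restricted to the affine space $\Sigma_f$, combined with the optimality of $\sigma$. The second is the standard equivalence of shifted quantities for uniformly convex N-functions from \cite{DieningEttwein08,DieningFornasierTomasiWank20}, transferred to $\phi^*$.

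\textbf{Step 1: uniform convexity of $\phi^*$.} Since $(\phi^*)'=(\phi')^{-1}$ by Proposition~\ref{prop:Conjugate}, the two-sided difference quotient bound of Definition~\ref{def:UniformConvexity} for $\phi'$ transfers to $(\phi^*)'$. For $s=\phi'(a)$ and $S=\phi'(b)$ one has
\[
\frac{(\phi^*)'(S)-(\phi^*)'(s)}{S-s} = \Big(\frac{\phi'(b)-\phi'(a)}{b-a}\Big)^{-1} \eqsim \frac{b}{\phi'(b)} = \frac{(\phi^*)'(S)}{S}.
\]
Hence $\phi^*$ is uniformly convex with the same constants. With this, the cited results (e.g.\ \cite[Lem.~41]{DieningFornasierTomasiWank20}, or \cite{DieningEttwein08}) give for all $x,y\in\mathbb{R}$
\[
\big(A^*_\phi(x)-A^*_\phi(y)\big)(x-y) \eqsim \frac{(\phi^*)'(|x|\vee|y|)}{|x|\vee|y|}\,|x-y|^2 \eqsim |V_{\phi^*}(x)-V_{\phi^*}(y)|^2.
\]
If a self-contained argument is needed, one writes $A^*_\phi(x)-A^*_\phi(y)$ as an integral of the derivative along the segment. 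The uniform convexity bound on $(\phi^*)'(t)/t$ along the segment then yields the first equivalence. The second follows by the analogous computation for $V_{\phi^*}$, whose derivative is comparable to $\sqrt{(\phi^*)'(t)/t}$. Applying this with $x=w_\alpha^{-1}\tau_\alpha$ and $y=w_\alpha^{-1}\sigma_\alpha$ and multiplying by $w_\alpha$ gives the pointwise equivalences. The factor $w_\alpha^{-2}$ from $|x-y|^2$, combined with $w_\alpha$ and with $1/(|x|\vee|y|)=w_\alpha/(|\sigma_\alpha|\vee|\tau_\alpha|)$, produces the stated form.

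\textbf{Step 2: left inequality.} By convexity of $J^*$,
\[
J^*(\tau)-J^*(\sigma)\le \nabla J^*(\tau)\cdot(\tau-\sigma) = \sum_\alpha A^*_\phi(w_\alpha^{-1}\tau_\alpha)(\tau_\alpha-\sigma_\alpha) - (\tau-\sigma)^\top Bg.
\]
The term $-(\tau-\sigma)^\top Bg$ is removed using the Euler–Lagrange equation of $\sigma$, tested with $\tau-\sigma\in\Sigma_0$:
\[
\sum_\alpha A^*_\phi(w_\alpha^{-1}\sigma_\alpha)(\tau_\alpha-\sigma_\alpha) = (\tau-\sigma)^\top Bg.
\]
Substituting this gives the claimed left inequality.

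\textbf{Step 3: right inequality.} This is the main obstacle. I would write
\[
J^*(\tau)-J^*(\sigma) = \sum_\alpha w_\alpha\Big(\phi^*(|x_\alpha|)-\phi^*(|y_\alpha|)-A^*_\phi(y_\alpha)(x_\alpha-y_\alpha)\Big),
\]
again using the Euler–Lagrange equation of $\sigma$. Each summand is a Bregman distance of $t\mapsto\phi^*(|t|)$, which equals
\[
\int_0^1 \big(A^*_\phi(y+s(x-y))-A^*_\phi(y)\big)(x-y)\,\mathrm{d}s = \int_0^1 \tfrac{1}{s}\big(A^*_\phi(y+s(x-y))-A^*_\phi(y)\big)\big(s(x-y)\big)\,\mathrm{d}s.
\]
Applying Step~1 to the integrand gives a lower bound of the form
\[
\int_0^1 s\,\frac{(\phi^*)'(m_s)}{m_s}\,|x-y|^2\,\mathrm{d}s, \qquad m_s=|y|\vee|y+s(x-y)|.
\]
It then remains to show that for $s\in[1/2,1]$ one has $m_s\eqsim |x|\vee|y|$, which follows since $|y+s(x-y)|\ge$ roughly $|x|/2$ or $m_s\ge|y|$ covers both cases, and that uniform convexity makes $(\phi^*)'(t)/t$ comparable under comparable arguments, i.e.\ it is $\Delta_2$-type. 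This bounds the Bregman distance from below by a constant times $\big(A^*_\phi(x)-A^*_\phi(y)\big)(x-y)$. Summing over $\alpha$ proves the right inequality, with constants depending only on those in Definition~\ref{def:UniformConvexity}.
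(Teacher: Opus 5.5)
Your proposal is correct and follows the paper's argument. The left inequality, via convexity of $J^*$ and the Euler--Lagrange equation tested with $\tau-\sigma\in\Sigma_0$, is identical. Your Steps 1 and 3 (transfer of uniform convexity to $\phi^*$, the Bregman-integral representation, and the shift change for $s\in[1/2,1]$) spell out what the paper delegates to \cite[Lem.~3, 41, and 42]{DieningFornasierTomasiWank20}.

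One imprecision: the phrase ``roughly $|x|/2$'' is not right as stated, since at $s=1/2$ the bound $|y+s(x-y)|\ge(2s-1)|x|$ degenerates. Combining it with $m_s\ge|y|$ does give $m_s\ge\tfrac{s}{2-s}(|x|\vee|y|)\ge\tfrac13(|x|\vee|y|)$, which is all your argument needs.
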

\begin{proof}
By the convexity of $J^*$ and the property $(J^*)'(\sigma) = 0$ in $\Sigma_0^*$ we obtain
\begin{align*}
&J^*(\tau) - J^*(\sigma) \leq  (J^*)'(\tau)[\tau - \sigma] = \big( (J^*)'(\tau) - (J^*)'(\sigma)\big) [\tau - \sigma] \\
& = \sum_{\alpha \in \cA} \big( A^*_\phi(w_\alpha^{-1} \tau_\alpha ) - A^*_\phi(w_\alpha^{-1} \sigma_\alpha) \big) (\tau_\alpha - \sigma_\alpha)\qquad\qquad\text{for all }\tau \in \Sigma_f.
\end{align*}
The second  bound in the lemma relies on properties of (shifted) N-functions, displayed for example in the appendix of \cite{DieningFornasierTomasiWank20}. A corresponding proof that extends directly to the discrete setting can be found in \cite[Lem.~42]{DieningFornasierTomasiWank20}. The pointwise equivalence can be found in \cite[Lem.~3 and 41]{DieningFornasierTomasiWank20}.
\end{proof}

With these preliminary results, we verify the convergence of the dIRLS scheme.
\begin{proof}[Proof of Theorem~\ref{thm:ConvOfDualIRLS}]
The convergence proof follows arguments from \cite{BalciDieningStorn23}, which is itself based on \cite{DieningFornasierTomasiWank20}. Let $\sigma \in \Sigma_f$ denote the minimizer of the dual energy $J^*$ and let $\sigma^n\in \Sigma_f$ denote the current iterate of the dIRLS scheme in \eqref{eq:dualKacSaddle}.
Since $\sigma^n - \sigma \in \Sigma_0$, the definition of $\sigma^{n+1} \in \Sigma_f$ in \eqref{eq:dualKacSaddle}, Lemma~\ref{lem:natDist}, and the Cauchy--Schwarz inequality yield
\begin{align}\label{eq:addends}
\begin{aligned}
&J^*(\sigma^n) - J^*(\sigma) \leq \sum_{\alpha \in \cA} \big(A_\phi^*(w_\alpha^{-1}\sigma^n_\alpha) - A_\phi^*(w_\alpha^{-1}\sigma_\alpha)\big) (\sigma^n_\alpha -\sigma_\alpha)\\
&  = \sum_{\alpha \in \cA} \frac{(\phi^*)'(w_\alpha^{-1}|\sigma^n_\alpha|)}{|\sigma^n_\alpha|} (\sigma^n_\alpha - \sigma^{n+1}_\alpha)\cdot (\sigma^n_\alpha - \sigma_\alpha)\\
& \leq \frac{1}{2\gamma} \sum_{\alpha \in \cA}  \frac{(\phi^*)'(w_\alpha^{-1}|\sigma^n_\alpha|)}{|\sigma^n_\alpha|} |\sigma^n_\alpha-\sigma^{n+1}_\alpha|^2 + \frac{\gamma}{2} \sum_{\alpha \in \cA} \frac{(\phi^*)'(w_\alpha^{-1}|\sigma^n_\alpha|)}{|\sigma^n_\alpha|} |\sigma^n_\alpha-\sigma_\alpha|^2.
\end{aligned}
\end{align}
It follows by \eqref{eq:dualKacSaddle} that
\begin{align*}
\sum_{\alpha\in \cA}  \frac{(\phi^*)'(w_\alpha^{-1}|\sigma^n_\alpha|)}{|\sigma^n_\alpha|} \sigma_\alpha^{n+1}(\sigma_\alpha^{n+1} - \sigma_\alpha^n)  = (\sigma^{n+1} - \sigma^n)^\top Bg.
\end{align*}
This identity, the definition of the relaxed energy in \eqref{eq:DefRelaxedJ}, and Lemma~\ref{lem:weigthedQuadEnergy} yield
\begin{align}\label{eq:aaddesns2}
\begin{aligned}
& \sum_{\alpha \in \cA} \frac{1}{2} \frac{(\phi^*)'(w_\alpha^{-1}|\sigma^n_\alpha|)}{|\sigma^n_\alpha|} |\sigma^n_\alpha-\sigma^{n+1}_\alpha|^2 =\frac{1}{2} \sum_{\alpha \in \cA}  \frac{(\phi^*)'(w_\alpha^{-1}|\sigma^n_\alpha|)}{|\sigma^n_\alpha|} |\sigma^n_\alpha|^2  - (\sigma^n)^\top Bg \\
&\quad - \frac12 \sum_{\alpha \in \cA}  \frac{(\phi^*)'(w_\alpha^{-1}|\sigma^n_\alpha|)}{|\sigma^n_\alpha|} |\sigma^{n+1}_\alpha|^2 + (\sigma^{n+1})^\top Bg \\
&= J^*(\sigma^n,\sigma^n) - J^*(\sigma^{n+1},\sigma^n) \leq J^*(\sigma^n) - J^*(\sigma^{n+1}).
\end{aligned}
\end{align}
To bound the second addend in \eqref{eq:addends}, we exploit the assumption in \eqref{eq:QuadGrowth}, which yields due to Lemma~\ref{lem:equiAssumtpion} the bound
\begin{align*}
\frac{(\phi^*)'(t)}{t} \leq c_\phi^{-1} \leq \frac{C_\phi}{c_\phi} \frac{(\phi^*)'(s)}{s}\qquad\text{for all }t,s \geq 0.
\end{align*}
This inequality and Lemma~\ref{lem:natDist} yield with some constant $C_\textup{uc} < \infty$ depending solely on the uniform convexity
\begin{align}\label{eq:Dasdsada}
\begin{aligned}
\sum_{\alpha \in \cA} \frac{(\phi^*)'(w_\alpha^{-1}|\sigma^n_\alpha|)}{|\sigma^n_\alpha|} |\sigma^n_\alpha-\sigma_\alpha|^2 & \leq   \frac{C_\phi}{c_\phi} \sum_{\alpha \in \cA} \frac{(\phi^*)'\big(w_\alpha^{-1}(|\sigma^n_\alpha| \vee |\sigma_\alpha|)\big)}{|\sigma^n_\alpha|\vee |\sigma_\alpha|} |\sigma^n_\alpha-\sigma_\alpha|^2 \\
 &\leq 2  C_\textup{uc} \frac{C_\phi}{c_\phi} \big( J^*(\sigma^n) - J^*(\sigma)\big).
\end{aligned}
\end{align}
Hence, exploiting the bounds for the addends in \eqref{eq:addends} yields for all $\gamma >0$ that
\begin{align*}
\gamma(1-\gamma C_\textup{uc} C_\phi/c_\phi )\big(J^*(\sigma^n) - J^*(\sigma)\big) \leq J^*(\sigma^n) - J^*(\sigma^{n+1}).
\end{align*}
This inequality and $\max_{\gamma>0} \gamma(1- \gamma C_\textup{uc} C_\phi/c_\phi ) = c_\phi/(4 C_\textup{uc} C_\phi)$ yield the energy reduction
\begin{align*}
c_\phi/(4 C_\textup{uc} C_\phi)\big(J^*(\sigma^n) - J^*(\sigma)\big) \leq J^*(\sigma^n) - J^*(\sigma^{n+1}).
\end{align*}
Rearranging the terms yields with $\kappa \coloneqq c_\phi/(4 C_\textup{uc} C_\phi)$
\begin{align*}
J^*(\sigma^{n+1}) - J^*(\sigma) & = \big(J^*(\sigma^{n}) - J^*(\sigma)\big) - \big( J^*(\sigma^{n}) -J^*(\sigma^{n+1}) \big) \\
&\leq (1-\kappa) \big(J^*(\sigma^n) - J^*(\sigma)\big).
\end{align*}
Applying this estimate inductively concludes the proof.
\end{proof}
We conclude our convergence analysis with an investigation of the primal approximation $B(u^{n+1}_g)$ with $u^{n+1}_g\coloneqq u^{n+1}+g$ and Lagrange multiplier $u^{n+1}\in \mathbb{R}^N$ defined in \eqref{eq:dualKacSaddle}. Since the natural notions of distance discussed in Lemma~\ref{lem:natDist} extend to the primal integrand $\phi$, we focus on the distance with respect to $V_\phi$ defined as in \eqref{eq:DefV}; that is,
\begin{align*}
V_{\phi}(t) \coloneqq \begin{cases}
\sqrt{\frac{\phi'(|t|)}{|t|}}t&\text{for }t\neq 0,\\
0&\text{for }t=0.
\end{cases}
\end{align*}
 The following theorem shows that the primal approximation converges with the same rate with respect to this natural norm.
\begin{theorem}[Convergence of the primal variable]\label{thm:ConvPrimal}
Suppose that $\ker(B)\cap V_0=\lbrace 0\rbrace$.
Let $z_\alpha^n \coloneqq A_\phi^*(w^{-1}_\alpha\sigma_\alpha^n)$ denote the reconstruction of our dual approximation $\sigma^n\in \Sigma_f$ and suppose that $\phi$ is uniformly convex and satisfies \eqref{eq:NonDecreasing}. Then we have
\begin{align*}
&\sum_{\alpha\in \cA} w_\alpha |V_\phi((Bu_g)_\alpha) - V_\phi (z^n_\alpha)|^2 + \sum_{\alpha \in \cA} w_\alpha \frac{\phi'(|z_\alpha^n|)}{|z_\alpha^n|}|z^n_\alpha - B(u^{n+1}_g)|^2\\
&\qquad\qquad\qquad\qquad\qquad\qquad\quad\qquad\qquad\qquad\qquad\qquad \lesssim J^*(\sigma^n) - J^*(\sigma).
\end{align*}
If the integrand $\phi$ satisfies \eqref{eq:QuadGrowth}, the addends in the second sum are controlled by
\begin{align*}
\frac{c_\phi}{C_\phi} | V_\phi \big(z^n_\alpha\big) - V_\phi((Bu^{n+1}_g)_\alpha)|^2 &\lesssim \frac{\phi'(|z_\alpha^n|)}{|z_\alpha^n|}|z^n_\alpha - B(u^{n+1}_g)|^2\\
& \lesssim \frac{C_\phi}{c_\phi} | V_\phi \big(z_\alpha^n\big) - V_\phi((B u^{n+1}_g)_\alpha)|^2.
\end{align*}
Combining these statements yields
\begin{align*}
J(u_g^{n+1}) - J(u_g) \lesssim \frac{C_\phi}{c_\phi} \big( J^*(\sigma^n) - J^*(\sigma)\big).
\end{align*}
The hidden constants solely depend on the uniform convexity of $\phi$.
\end{theorem}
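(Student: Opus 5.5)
The plan is to work entirely with the identities already available and transfer the dual estimates to the primal side through the relation $A_\phi^*=(A_\phi)^{-1}$.

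\textbf{First sum.} Since $Bu_g=A_\phi^*(w^{-1}\sigma)$ by Theorem~\ref{thm:Duality} and $z^n=A_\phi^*(w^{-1}\sigma^n)$, we have
\[
w^{-1}\sigma^n_\alpha=A_\phi(z^n_\alpha)\qquad\text{and}\qquad w^{-1}\sigma_\alpha=A_\phi((Bu_g)_\alpha).
\]
The standard shifted N-function equivalences from \cite[Lem.~3, 41]{DieningFornasierTomasiWank20} give
\[
(A_\phi(a)-A_\phi(b))(a-b)\eqsim |V_\phi(a)-V_\phi(b)|^2 .
\]
The pairing is symmetric in primal and dual variables: with $a=A_\phi^*(s)$ and $b=A_\phi^*(t)$ one has $(s-t)(a-b)$ on both sides. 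So for every $\alpha$,
\[
w_\alpha|V_\phi((Bu_g)_\alpha)-V_\phi(z^n_\alpha)|^2\eqsim w_\alpha\bigl(A_\phi^*(w_\alpha^{-1}\sigma^n_\alpha)-A_\phi^*(w_\alpha^{-1}\sigma_\alpha)\bigr)(w_\alpha^{-1}\sigma^n_\alpha-w_\alpha^{-1}\sigma_\alpha).
\]
Summing over $\alpha$ and applying Lemma~\ref{lem:natDist} bounds the first sum by $J^*(\sigma^n)-J^*(\sigma)$.

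\textbf{Second sum.} By \eqref{eq:aeadssafg},
\[
(Bu^{n+1}_g)_\alpha=\frac{(\phi^*)'(w_\alpha^{-1}|\sigma^n_\alpha|)}{|\sigma^n_\alpha|}\,\sigma^{n+1}_\alpha ,
\]
while $z^n_\alpha$ is the same expression with $\sigma^{n+1}_\alpha$ replaced by $\sigma^n_\alpha$. Write $\omega_\alpha=(\phi^*)'(w_\alpha^{-1}|\sigma^n_\alpha|)/|\sigma^n_\alpha|$. Then
\[
z^n_\alpha-(Bu^{n+1}_g)_\alpha=\omega_\alpha(\sigma^n_\alpha-\sigma^{n+1}_\alpha).
\]
Since $\phi'(|z^n_\alpha|)=w_\alpha^{-1}|\sigma^n_\alpha|$ and $|z^n_\alpha|=(\phi^*)'(w_\alpha^{-1}|\sigma^n_\alpha|)$, we get $w_\alpha\phi'(|z^n_\alpha|)/|z^n_\alpha|=\omega_\alpha^{-1}$. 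Hence the second sum equals
\[
\sum_\alpha \omega_\alpha|\sigma^n_\alpha-\sigma^{n+1}_\alpha|^2 ,
\]
which \eqref{eq:aaddesns2} bounds by $2(J^*(\sigma^n)-J^*(\sigma^{n+1}))\le 2(J^*(\sigma^n)-J^*(\sigma))$.

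The step needing most care is the case $\sigma^n_\alpha=0$ under the convention \eqref{eq:Def4Zero}. I would handle it by continuity, or by noting that strict monotonicity (from \eqref{eq:NonDecreasing}) keeps the weights finite and positive. Only \eqref{eq:NonDecreasing} is needed here, for Lemma~\ref{lem:weigthedQuadEnergy}, not \eqref{eq:QuadGrowth}.

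\textbf{Pointwise equivalence under \eqref{eq:QuadGrowth}.} Lemma~\ref{lem:natDist}, transferred to $\phi$, gives
\[
|V_\phi(a)-V_\phi(b)|^2\eqsim \frac{\phi'(|a|\vee|b|)}{|a|\vee|b|}\,|a-b|^2 .
\]
Under \eqref{eq:NonDecreasing} and \eqref{eq:QuadGrowth}, every value of $\phi'(t)/t$ lies in $[c_\phi,C_\phi]$. So replacing the weight at $|a|\vee|b|$ by the weight at $|z^n_\alpha|$ costs a factor at most $C_\phi/c_\phi$ in either direction, which yields both inequalities.

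\textbf{Energy bound.} Use the primal analogue of Lemma~\ref{lem:natDist} (same convexity argument with $J'(u_g)=0$ on $V_0$, since $u^{n+1}\in V_0$):
\[
J(u^{n+1}_g)-J(u_g)\lesssim \sum_\alpha w_\alpha|V_\phi((Bu^{n+1}_g)_\alpha)-V_\phi((Bu_g)_\alpha)|^2 .
\]
Split with the triangle inequality through $V_\phi(z^n_\alpha)$. The first piece is bounded by the first sum. The second piece is bounded, via the lower pointwise inequality, by $(C_\phi/c_\phi)$ times the second sum. Together these give the factor $C_\phi/c_\phi$.

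The main obstacle I expect is checking cleanly that the primal-side natural distance with the correct weights $w_\alpha$ matches the dual one with $w_\alpha^{-1}$ scaling. This requires tracking the $w_\alpha$ factors through $\phi^*$ versus $\phi$ and the inverse relation $(\phi^*)'=(\phi')^{-1}$ from Lemma~\ref{lem:equiAssumtpion}.
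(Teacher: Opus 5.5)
Your proposal is correct and follows the paper's proof. The second sum is treated exactly as in the paper, via $w_\alpha\phi'(|z^n_\alpha|)/|z^n_\alpha| = a_\alpha$ and \eqref{eq:aaddesns2}. The first sum is reduced to Lemma~\ref{lem:natDist}: the paper does this through the exact identity $V_\phi(A_\phi^*(s)) = V_{\phi^*}(s)$, and you do it through the symmetric monotonicity pairing, which is an inessential variation. The final energy bound comes from the primal analogue of Lemma~\ref{lem:natDist} together with the triangle inequality; you state this explicitly, whereas the paper leaves it implicit.

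One small correction: at $\sigma^n_\alpha = 0$ the weight $\omega_\alpha$ equals $w_\alpha^{-1}c_\phi^{-1}$. It is therefore finite only under \eqref{eq:QuadGrowth}, not as a consequence of strict monotonicity alone. For example, it is infinite for $\phi(t)=t^p/p$ with $p>2$. The paper's proof likewise tacitly assumes the scheme is well-defined, so this does not affect the argument.
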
 
\begin{proof}
Let $\sigma\in \Sigma_f$ denote the exact dual minimizer \eqref{eq:Minimizers} and set componentwise the vector $z_\alpha^n\coloneqq A_\phi^*(w^{-1}_\alpha\sigma_\alpha^n) \in \mathbb{R}^M$. Theorem~\ref{thm:Duality}, Proposition~\ref{prop:Conjugate}~\ref{itm:inverse}, and Lemma~\ref{lem:natDist} yield
\begin{align*}
\sum_{\alpha\in \cA} w_\alpha |V_\phi((Bu_g)_\alpha) - V_\phi(z^n_\alpha)|^2 & = \sum_{\alpha\in \cA} w_\alpha |V_{\phi^*}(w_\alpha^{-1}\sigma_\alpha) - V_{\phi^*}(w_\alpha^{-1}\sigma^n_\alpha)|^2\\
& \eqsim J^*(\sigma^n) - J^*(\sigma).
\end{align*}
To bound the second term in the theorem's first bound, we define for all $\alpha \in \cA$
\begin{align*}
\tilde z_\alpha^{n+1} \coloneqq B(u^{n+1}_g)_\alpha = \frac{(\phi^*)'(w_\alpha^{-1}|\sigma_\alpha^n|)}{|\sigma_\alpha^n|} \sigma_\alpha^{n+1}. 
\end{align*}
The difference to $z^n_\alpha \coloneqq A_\phi^*(w^{-1}_\alpha\sigma_\alpha^n)$ equals for all $\alpha \in \cA$
\begin{align*}
\tilde z^{n+1}_\alpha - z^n_\alpha = \frac{(\phi^*)'(w_\alpha^{-1}| \sigma_\alpha^n|)}{|\sigma_\alpha^n|} (\sigma^{n+1}_\alpha -  \sigma^n_\alpha).
\end{align*} 
With the weight $a_\alpha \coloneqq |\sigma_\alpha^n|/(\phi^*)'(w_\alpha^{-1} |\sigma_\alpha^n|)$ we obtain by squaring the identity and summing over all $\alpha \in \cA$ the equality
\begin{align*}
\sum_{\alpha \in \cA} 
 a_\alpha\, |\tilde z^{n+1}_\alpha - z^n_\alpha|^2 =\sum_{\alpha \in \cA} 
 \frac{(\phi^*)'(w_\alpha^{-1} |\sigma_\alpha^n|)}{|\sigma_\alpha^n|} |\sigma^{n+1}_\alpha -  \sigma^n_\alpha|^2.
\end{align*}
The right-hand side equals the term in \eqref{eq:aaddesns2} and can consequently be controlled by the same upper bound; that is,
\begin{align*}
\sum_{\alpha \in \cA}  a_\alpha\, |\tilde z^{n+1}_\alpha - z^n_\alpha|^2 \leq 2\big(J^*(\sigma^n) - J^*(\sigma^{n+1})\big) \leq 2 \big(J^*(\sigma^n) - J^*(\sigma)\big). 
\end{align*}
Proposition~\ref{prop:Conjugate}~\ref{itm:inverse} and the definition of $z^n$ show 
\begin{align*}
a_\alpha \coloneqq \frac{|\sigma_\alpha^n|}{(\phi^*)'(w_\alpha^{-1} |\sigma_\alpha^n|)} = w_\alpha \frac{\phi'(|z_\alpha^n|)}{|z^n_\alpha|}\qquad\text{for all }\alpha\in \cA.
\end{align*}
Combining these results verifies the first statement of the theorem. The second statement follows by an argument similar to \eqref{eq:Dasdsada}. The last statement results from the first two statements and the triangle inequality.
\end{proof}
\subsection{Regularization}\label{subsec:Regularization}
Our primary goal is the minimization of the $p$-Dirichlet energy; that is, the integrand reads $\phi(t) = t^p/p$ with exponent $p > 2$. While this integrand matches the non-decreasing condition \eqref{eq:NonDecreasing}, the required boundedness in \eqref{eq:QuadGrowth} fails in the sense that $\phi'(t)/t = t^{p-2}$ satisfies
\begin{align*}
0 =  \lim_{t\searrow 0} \frac{\phi'(t)}{t} \leq \lim_{t\to \infty} \frac{\phi'(t)}{t} = \infty.
\end{align*}
We could directly regularize the dual functional, as for example done in \cite{BalciDieningStorn23}. However, we exploit the regularization of the primal energy introduced in \cite{DieningFornasierTomasiWank20} for $p<2$. The regularization involves a relaxation interval $\delta = [\delta_-,\delta_+]\subset (0,\infty)$ with lower and upper bounds $0<\delta_-\leq \delta_+< \infty$. Given an N-function $\phi$ satisfying the non-decreasing condition \eqref{eq:NonDecreasing}, we define its (shifted) regularization $\phi_\delta$ by
\begin{align*}
\phi_\delta(t) \coloneqq 
\begin{cases}
\frac{\phi'(\delta_-)}{2\delta_-} t^2+ \phi(\delta_-)-\frac{\delta_-}{2}\phi'(\delta_-)
&\text{for }  t\in [0, \delta_-),\\
\phi(t) &\text{for } t\in [\delta_-, \delta_+],\\
\frac{\phi'(\delta_+)}{2\delta_+} t^2 + \phi(\delta_+)-\frac{\delta_+}{2}\phi'(\delta_+)
&\text{for }  t\in (\delta_+,\infty).
\end{cases}
\end{align*}
This energy is shifted in the sense that $\phi_\delta(0) = \phi(\delta_-)-\frac{\delta_-}{2}\phi'(\delta_-) \neq 0$. 
The derivative of $\phi_\delta$, which is independent of the shift, reads
\begin{align*}
\phi_\delta'(t) = t \left( \frac{\phi'(\delta_-)}{\delta_-} \vee \frac{\phi'(t)}{t} \wedge \frac{\phi'(\delta_+)}{\delta_+}\right)\qquad\text{for all }t\geq 0.
\end{align*}
\begin{remark}[$p$-Dirichlet energy]
For the $p$-Dirichlet energy $\phi(t) = t^p/p$ this regularization, motivated in \cite{DieningFornasierTomasiWank20} via a bi-level optimization, clips the derivative between the values $\delta_-$ and $\delta_+$ in the sense that 
\begin{align*}
\phi_\delta'(t) = t(\delta_- \vee t \wedge \delta_+)^{p-2}\qquad\text{for all }t\geq 0.
\end{align*}
\end{remark}
Using the identity $(\phi_\delta^*)' = (\phi_\delta')^{-1}$ in Proposition~\ref{prop:Conjugate}, we can characterize the derivative of the Fenchel conjugate by
\begin{align*}
(\phi_\delta^*)'(r)=
\begin{cases}
\frac{\delta_-}{\phi'(\delta_-)}r
&\text{for } r\in [0,\phi'(\delta_-)],\\
(\phi^*)'(r)
&\text{for }  r\in (\phi'(\delta_-), \phi'(\delta_+)],\\
\frac{\delta_+}{\phi'(\delta_+)}r
&\text{for } r\in (\phi'(\delta_+),\infty).
\end{cases}
\end{align*}
Integrating this derivative leads to the (shifted) dual integrand $\phi_\delta^*(r) = \int_0^r (\phi^*_\delta)'(s)\ds - \phi(\delta_-) + \frac{\delta_-}{2}\phi'(\delta_-) $ with
\begin{align*}
\phi_\delta^*(r) =
\begin{cases}
\frac{\delta_-}{2\phi'(\delta_-)} r^2 - \phi(\delta_-) + \frac{\delta_-}{2}\phi'(\delta_-) &\text{for } r\in [0,\phi'(\delta_-)),\\
\phi^*(r) &\text{for } r\in [\phi'(\delta_-), \phi'(\delta_+)],\\
\frac{\delta_+}{2\phi'(\delta_+)}\,r^2 -\phi(\delta_+) +\frac{\delta_+}{2}\phi'(\delta_+)&\text{for } r\in (\phi'(\delta_+),\infty).
\end{cases}
\end{align*}
Since the shift in the dual integrand corresponds to the shift of the primal integrand, the duality results in Theorem~\ref{thm:Duality} still apply. We define the regularized energies
\begin{align*}
J_\delta(v) &\coloneqq \sum_{\alpha \in \cA} w_\alpha \phi_\delta(|(Bv)_\alpha|) - f \cdot v && \text{for all }v\in \mathbb{R}^N,\\
J^*_\delta(\tau)& \coloneqq \sum_{\alpha \in \cA} w_\alpha \phi_\delta^*(w_\alpha^{-1}|\tau_\alpha|) - \tau^\top Bg&&\text{for all }\tau \in \mathbb{R}^M.
\end{align*}
\begin{lemma}[Monotonicity]\label{lem:Monotonicity}
Suppose that $\phi$ satisfies the non-decreasing condition \eqref{eq:NonDecreasing}. Then the regularization is pointwise monotone in $\delta$ in the sense that for increasing intervals $\delta \subset \Delta \subset (0,\infty)$ and all $t\geq 0$
\begin{align*}
\phi_\delta(t) \leq \phi_\Delta(t)\qquad\text{and}\qquad \phi_\delta^*(t) \geq \phi^*_\Delta(t).  
\end{align*}
This yields for all $v\in \mathbb{R}^N$, $\tau \in \mathbb{R}^M$, and $\delta \subset \Delta \subset (0,\infty)$ the monotonicity
\begin{align*} 
J_\delta(v) \leq J_\Delta(v)\qquad \text{and}\qquad J_\delta^*(\tau) \geq J^*_\Delta(\tau).
\end{align*}
\end{lemma}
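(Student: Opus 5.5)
Write $\psi(t) \coloneqq \phi'(t)/t$, which is non-decreasing by \eqref{eq:NonDecreasing}. The plan is to compare $\phi_\delta$ and $\phi_\Delta$ through their derivatives and their values at the origin. The dual inequality will then follow from conjugation, and the energy inequalities from summation.

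\textbf{Derivatives.} From the formula for the derivative we have $\phi_\delta'(t) = t\,\big(\psi(\delta_-)\vee\psi(t)\wedge\psi(\delta_+)\big)$. Since $\Delta_-\le\delta_-$ and $\psi$ is monotone, we get $\psi(\Delta_-)\le\psi(\delta_-)$. Likewise $\psi(\delta_+)\le\psi(\Delta_+)$. The clipping for $\Delta$ uses a wider window, so it differs from the $\delta$-clipping as follows:
\begin{itemize}
\item on $[0,\delta_-)$ it is smaller: $\psi(\Delta_-)\vee\psi(t)\le\psi(\delta_-)$;
\item on $(\delta_+,\infty)$ it is larger;
\item on $[\delta_-,\delta_+]$ both clippings equal $\psi(t)$.
\end{itemize}
Hence $\phi_\Delta'\le\phi_\delta'$ on $[0,\delta_-)$, equality holds on $[\delta_-,\delta_+]$, and $\phi_\Delta'\ge\phi_\delta'$ on $(\delta_+,\infty)$.

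\textbf{Anchoring at $\delta_-$.} Both functions equal $\phi(\delta_-)$ at $t=\delta_-$, because $\delta_-\in[\Delta_-,\Delta_+]$. Integrating the derivative comparison from $\delta_-$ gives the primal inequality. For $t>\delta_-$, the difference $\phi_\Delta-\phi_\delta$ has non-negative derivative, so $\phi_\Delta(t)\ge\phi_\delta(t)$. For $t<\delta_-$ we have
\[
\phi_\Delta(t)-\phi_\delta(t) = -\int_t^{\delta_-}\big(\phi_\Delta'-\phi_\delta'\big)\,ds \ge 0 .
\]
So $\phi_\delta\le\phi_\Delta$ everywhere, including $t=0$. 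The shift constants need no separate check, since they are exactly what makes $\phi_\delta(\delta_-)=\phi(\delta_-)$ hold (direct check: $\tfrac{\phi'(\delta_-)}{2\delta_-}\delta_-^2+\phi(\delta_-)-\tfrac{\delta_-}{2}\phi'(\delta_-)=\phi(\delta_-)$).

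\textbf{Dual inequality.} The shifted functions are not N-functions, so I will not invoke Proposition~\ref{prop:Conjugate} directly. Instead I will repeat the derivative argument on the explicit formula for $(\phi_\delta^*)'$, whose inverse-clipping is order-reversing. On $[0,\phi'(\delta_-)]$ the slope is $r/\psi(\delta_-)\le r/\psi(\Delta_-)$ or $(\phi^*)'(r)$, so $(\phi^*_\Delta)'\ge(\phi^*_\delta)'$ there. Equality holds on the middle range, and the reverse inequality holds above $\phi'(\delta_+)$. The anchor is at $r=\phi'(\delta_-)$, where $\phi^*_\delta(r) = \tfrac{\delta_-}{2}\phi'(\delta_-)-\phi(\delta_-)+\tfrac{\delta_-}{2}\phi'(\delta_-) = \phi^*(\phi'(\delta_-))$ by Proposition~\ref{prop:Conjugate}~\ref{itm:ConjugateC}. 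The same value holds for $\phi^*_\Delta$, which equals $\phi^*$ there. Integrating from this anchor as before yields $\phi^*_\delta\ge\phi^*_\Delta$.

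An alternative for this step is to check that $\phi_\delta^*$ is the genuine Fenchel conjugate $\sup_s(rs-\phi_\delta(s))$. Then $\phi_\delta\le\phi_\Delta$ immediately implies $\phi^*_\delta\ge\phi^*_\Delta$, since conjugation reverses order.

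\textbf{Energies.} The energy statements follow by multiplying with $w_\alpha>0$, evaluating at $|(Bv)_\alpha|$ and $w_\alpha^{-1}|\tau_\alpha|$ respectively, and summing over $\alpha\in\cA$. The terms $f\cdot v$ and $\tau^\top Bg$ are identical on both sides.

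The main obstacle is the bookkeeping of the shift constants, that is, confirming that the anchors coincide. Once anchored at $\delta_-$ (respectively $\phi'(\delta_-)$), the argument reduces to the sign comparison of derivatives.
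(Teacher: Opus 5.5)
Your proof is correct and follows the route the paper indicates. The paper skips this proof and refers to the argument for the special case $\Delta=(0,\infty)$, which likewise writes the difference of the integrands as an integral of derivative differences from a point of $\delta$ where both coincide, and signs the integrand via the monotonicity of $\phi'(t)/t$. The one step you leave implicit in the dual part, namely $(\phi^*)'(r)\geq r\,\delta_-/\phi'(\delta_-)$ for $0<r\leq\phi'(\delta_-)$ and $(\phi^*)'(r)\leq r\,\delta_+/\phi'(\delta_+)$ for $r\geq\phi'(\delta_+)$, is exactly the non-increasing property of $(\phi^*)'(r)/r$ from Lemma~\ref{lem:equiAssumtpion}.
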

We skip the proof, as it follows by similar arguments as the special case $\Delta = (0,\infty)$ discussed in the following for the regularization errors defined for $t\geq 0$ by
\begin{align*}
e_\delta(t)\coloneqq \phi(t)-\phi_\delta(t)\qquad\text{and}\qquad e^*_\delta(t)\coloneqq \phi^*_\delta(t)-\phi^*(t).
\end{align*}
These errors equal
\begin{align}\label{eq:representationError}
\begin{aligned}
e_\delta(t) &= \begin{cases} 
\int^t_{\delta_-} s\big(\frac{\phi'(s)}{s}-\frac{\phi'(\delta_-)}{\delta_-}\big)\ds &\text{for } t\in [0, \delta_-),\\
0 &\text{for }t \in [ \delta_-,\delta_+],\\
\int_{\delta_+}^{t} s\big(\frac{\phi'(s)}{s}-\frac{\phi'(\delta_+)}{\delta_+}\big)\ds &\text{for } t\in ( \delta_+,\infty),
\end{cases}\\
e^*_\delta(t) &= \begin{cases} 
\int^t_{\phi'(\delta_-)} s\big(\frac{\delta_-}{\phi'(\delta_-)}-\frac{(\phi^*)'(s)}{s}\big)\ds &\text{for } t\in (0, \phi'(\delta_-)),\\
0 &\text{for }t \in [ \phi'(\delta_-),\phi'(\delta_+)],\\
\int_{\phi'(\delta_+)}^{t} s\big(\frac{\delta_+}{\phi'(\delta_+)}-\frac{(\phi^*)'(s)}{s} \big)\ds &\text{for } t\in ( \phi'(\delta_+),\infty).
\end{cases}
\end{aligned}
\end{align}
\begin{lemma}[Pointwise regularization error]\label{lem:PointwiseRegularizationError}
Suppose that $\phi$ satisfies the non-de\-creasing condition \eqref{eq:NonDecreasing}. Then we have
\begin{align*}
0 &\leq e_\delta(t) \leq  \begin{cases} 
 \frac12 \delta_- \phi'(\delta_-)-\phi(\delta_-) &\text{for } t\in [0, \delta_-),\\
0 &\text{for }t \in [ \delta_-,\delta_+],\\
\phi(t) - \phi(\delta_+)&\text{for } t\in ( \delta_+,\infty),
\end{cases}\\
0 &\leq e^*_\delta(t) \leq  \begin{cases} 
 \phi^*(\phi'(\delta_-))-\frac12 \delta_- \phi'(\delta_-) &\text{for } t\in [0, \phi'(\delta_-)),\\
0 &\text{for }t \in [ \phi'(\delta_-),\phi'(\delta_+)],\\
(t^2 - \phi'(\delta_+)^2)\frac{\delta_+}{2 \phi'(\delta_+)} &\text{for } t\in ( \phi'(\delta_+),\infty).
\end{cases}
\end{align*}
\end{lemma}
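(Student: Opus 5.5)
Both error bounds follow from the integral representations in \eqref{eq:representationError}. In each case I would do a sign analysis of the integrand, using the monotonicity of $\psi(s)\coloneqq\phi'(s)/s$ from \eqref{eq:NonDecreasing} and of $(\phi^*)'(s)/s$ from Lemma~\ref{lem:equiAssumtpion}. The lower bounds $0\le e_\delta$ and $0\le e^*_\delta$ come from the signs. The upper bounds come from dropping a nonnegative term and integrating explicitly. The middle cases are trivial because $\phi_\delta=\phi$ on $[\delta_-,\delta_+]$ and $\phi_\delta^*=\phi^*$ on $[\phi'(\delta_-),\phi'(\delta_+)]$.

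\emph{Primal error.} For $t<\delta_-$ and $s\in[t,\delta_-]$ we have $\psi(s)\le\psi(\delta_-)$, so the integrand is $\le 0$. Since the integral runs from $\delta_-$ down to $t$, this gives $e_\delta(t)\ge 0$. For the upper bound I use $s\psi(s)=\phi'(s)\ge 0$, so that
\begin{align*}
e_\delta(t)=\int_t^{\delta_-}\Big(s\frac{\phi'(\delta_-)}{\delta_-}-\phi'(s)\Big)\ds\le \int_0^{\delta_-} s\frac{\phi'(\delta_-)}{\delta_-}\ds-\int_t^{\delta_-}\phi'(s)\ds .
\end{align*}
A cleaner route is to note that $e_\delta(t)=\phi(t)-\phi_\delta(t)$ is non-increasing on $[0,\delta_-)$, since its derivative $s(\psi(s)-\psi(\delta_-))$ is $\le 0$. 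Hence $e_\delta(t)\le e_\delta(0)=-\phi_\delta(0)=\tfrac12\delta_-\phi'(\delta_-)-\phi(\delta_-)$, using $\phi(0)=0$.

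For $t>\delta_+$ the integrand is $\ge0$ because $\psi(s)\ge\psi(\delta_+)$. Dropping the nonpositive term $-s\psi(\delta_+)$ gives $e_\delta(t)\le\int_{\delta_+}^t\phi'(s)\ds=\phi(t)-\phi(\delta_+)$.

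\emph{Dual error.} By Lemma~\ref{lem:equiAssumtpion}, $\chi(s)\coloneqq(\phi^*)'(s)/s$ is non-increasing. Moreover $\chi(\phi'(\delta_\pm))=\delta_\pm/\phi'(\delta_\pm)$ by $(\phi^*)'=(\phi')^{-1}$, where $\phi'$ is strictly increasing by Step~1 of that lemma. For $t<\phi'(\delta_-)$ and $s$ in between, $\chi(s)\ge\chi(\phi'(\delta_-))$, so the integrand is $\le0$. The integral is oriented backwards, so $e^*_\delta(t)\ge0$. The same reasoning shows that $e^*_\delta$ is non-increasing on $(0,\phi'(\delta_-))$, so the bound is the value at $t=0$:
\begin{align*}
e^*_\delta(0)=\int_0^{\phi'(\delta_-)}\big((\phi^*)'(s)-s\chi(\phi'(\delta_-))\big)\ds=\phi^*(\phi'(\delta_-))-\tfrac12\delta_-\phi'(\delta_-).
\end{align*}
As a consistency check, this value equals $-\phi_\delta^*(0)$, which by Proposition~\ref{prop:Conjugate}~\ref{itm:ConjugateC} matches the shift of $\phi_\delta^*$.

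For $t>\phi'(\delta_+)$ the integrand $s\big(\delta_+/\phi'(\delta_+)-\chi(s)\big)$ is $\ge0$. Dropping the nonpositive term $-(\phi^*)'(s)$ and integrating $s\,\delta_+/\phi'(\delta_+)$ gives exactly $(t^2-\phi'(\delta_+)^2)\frac{\delta_+}{2\phi'(\delta_+)}$.

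\emph{Main obstacle.} The only delicate point is the lower interval, where the orientation of the integrals makes the signs easy to get wrong. A second subtlety is that the limit $t\searrow 0$ must be justified via continuity of $\phi$ and $\phi^*$ together with $\phi(0)=\phi^*(0)=0$. Given those, the argument reduces to the monotonicity argument above.
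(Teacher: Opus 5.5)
Your proposal is correct and follows the paper's argument: a sign analysis of the integrands in \eqref{eq:representationError} using the monotonicity of $\phi'(s)/s$ and $(\phi^*)'(s)/s$, the maximum at $t=0$ on the lower interval, and dropping a nonpositive term on the upper interval; your dual computation just writes out what the paper calls ``similar.'' One harmless slip is in your consistency check: $e^*_\delta(0)=\phi^*_\delta(0)-\phi^*(0)=\phi^*_\delta(0)$, not $-\phi^*_\delta(0)$, so the sign is flipped compared with the primal identity $e_\delta(0)=-\phi_\delta(0)$.
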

\begin{proof}
Let $t < \delta_-$. Then \eqref{eq:representationError} characterizes the error by
\begin{align*}
e_\delta(t) = \int^t_{\delta_-} s\Big(\frac{\phi'(s)}{s}-\frac{\phi'(\delta_-)}{\delta_-}\Big)\ds = \int_t^{\delta_-} s\Big(\frac{\phi'(\delta_-)}{\delta_-}- \frac{\phi'(s)}{s}\Big)\ds. 
\end{align*} 
Since $s \leq \delta_-$, the integrand is non-negative according to \eqref{eq:NonDecreasing}. The integral attains its maximal value at $t= 0$ and it reads 
\begin{align*}
\int_0^{\delta_-} s\Big(\frac{\phi'(\delta_-)}{\delta_-}- \frac{\phi'(s)}{s}\Big)\ds = \frac12 \delta_-\,\phi'(\delta_-)-\phi(\delta_-).
\end{align*}
If $\delta_+ < t$, the error reads 
\begin{align*}
\int_{\delta_+}^{t} s\Big(\frac{\phi'(s)}{s}-\frac{\phi'(\delta_+)}{\delta_+}\Big)\ds = \phi(t) - \phi(\delta_+) - \frac{\phi'(\delta_+)}{2\delta_+} |t^2 - \delta_+^2|.
\end{align*}
This concludes the proof of the first statement. The result for the dual integrand follows similarly in combination with Proposition~\ref{prop:Conjugate}.
\end{proof}
We can combine this upper bound for the pointwise regularization error with the following global error.
\begin{lemma}[Global regularization error]\label{lem:GlobReg}
Let $\delta = [\delta_-,\delta_+]\subset (0,\infty)$ be some relaxation interval and let
\begin{align*}
u = \argmin_{v\in V_0} J(v+g),\qquad u_\delta = \argmin_{v\in V_0} J_\delta(v+g),\qquad  \sigma = \argmin_{\tau \in \Sigma_f} J^*(\tau).
\end{align*}
Set $u_{\delta,g}\coloneqq u_\delta+g$ and $u_g\coloneqq u+g$. The impact of the regularization is controlled by
\begin{align*}
0 \leq J_\delta(u_g) - J_\delta(u_{\delta,g}) \leq \sum_{\alpha\in\cA} w_\alpha\,e^*_\delta(w_\alpha^{-1}|\sigma_\alpha|) - \sum_{\alpha\in\cA} w_\alpha\,e_\delta(|(Bu_g)_\alpha|).
\end{align*}
%
%
\end{lemma}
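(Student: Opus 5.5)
The plan is to combine the duality of Theorem~\ref{thm:Duality} for the regularized problem with the pointwise error representations of the regularization. The lower bound $0 \leq J_\delta(u_g) - J_\delta(u_{\delta,g})$ is immediate, since $u_{\delta,g}$ minimizes $J_\delta(\cdot+g)$ over $V_0$ and $u_g - g = u \in V_0$ is admissible. For the upper bound, I would bound $-J_\delta(u_{\delta,g})$ from above by a dual energy. The paper notes that the duality results still apply to the shifted pair $(\phi_\delta,\phi_\delta^*)$, so we have $J_\delta(u_{\delta,g}) + f\cdot g = -\min_{\tau\in\Sigma_f} J^*_\delta(\tau)$. Testing this minimum with the unregularized dual minimizer $\sigma\in\Sigma_f$ gives
\begin{align*}
-J_\delta(u_{\delta,g}) \leq J^*_\delta(\sigma) + f\cdot g.
\end{align*}

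The second step is to compare everything with the unregularized quantities. I would write $J_\delta(u_g) = J(u_g) - \sum_\alpha w_\alpha e_\delta(|(Bu_g)_\alpha|)$ and $J^*_\delta(\sigma) = J^*(\sigma) + \sum_\alpha w_\alpha e^*_\delta(w_\alpha^{-1}|\sigma_\alpha|)$; both follow directly from the definitions of $e_\delta$ and $e^*_\delta$, since the linear terms $-f\cdot v$ and $-\tau^\top Bg$ coincide. Adding the two relations yields
\begin{align*}
J_\delta(u_g) - J_\delta(u_{\delta,g}) \leq J(u_g) + f\cdot g + J^*(\sigma) + \sum_{\alpha\in\cA} w_\alpha e^*_\delta(w_\alpha^{-1}|\sigma_\alpha|) - \sum_{\alpha\in\cA} w_\alpha e_\delta(|(Bu_g)_\alpha|).
\end{align*}
By Theorem~\ref{thm:Duality} for the original problem, $J(u_g) + f\cdot g + J^*(\sigma) = 0$, which gives the claim.

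The main obstacle is to justify that the duality identity holds for the shifted regularized integrand. Specifically, the constant shift $\phi_\delta(0) = \phi(\delta_-) - \tfrac{\delta_-}{2}\phi'(\delta_-)$ must be exactly compensated by the shift in $\phi_\delta^*$, so that $\phi_\delta^*$ really is the Fenchel conjugate of $\phi_\delta$. I would check this via Proposition~\ref{prop:Conjugate}. Conjugation of $\phi_\delta - c$ gives $(\phi_\delta - c)^* = (\phi_\delta-c+c)^{*} + c$, that is, the conjugate of the unshifted N-function plus $c$. The unshifted function $\phi_\delta - \phi_\delta(0)$ is an N-function with derivative $\phi_\delta'$, and its conjugate is $\int_0^r (\phi_\delta')^{-1}$. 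The stated formula for $\phi_\delta^*$ is then precisely this integral minus $\phi_\delta(0)$, which is consistent. Weak duality, namely that $-J^*_\delta(\tau) \leq J_\delta(v+g) + f\cdot g$ is attained with equality at the minimizers, then follows as in the cited proof.

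If I want to avoid relying on strong duality, only the inequality $-J^*_\delta(\sigma) \leq J_\delta(u_{\delta,g}) + f\cdot g$ is needed. This follows from Fenchel--Young, $w_\alpha\phi_\delta(|x|) + w_\alpha\phi_\delta^*(w_\alpha^{-1}|\tau_\alpha|) \geq \tau_\alpha x$ applied with $x = (Bu_{\delta,g})_\alpha$, summed over $\alpha$. The definition of $\Sigma_f$ gives $\sigma^\top B u_\delta = f\cdot u_\delta$, so the sum of right-hand sides equals $f\cdot u_\delta + \sigma^\top Bg$. This makes the argument self-contained.
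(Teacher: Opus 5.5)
Your proposal is correct and follows essentially the same route as the paper. The shared steps are duality for the regularized problem, testing $\min_{\tau\in\Sigma_f}J_\delta^*(\tau)$ with the unregularized minimizer $\sigma$, rewriting via $e_\delta$ and $e_\delta^*$, and closing with strong duality $J(u_g)+f\cdot g=-J^*(\sigma)$ for the original problem. Your extra remarks are also correct and make explicit what the paper leaves implicit: the lower bound is just minimality of $u_{\delta,g}$, and for the shifted pair $(\phi_\delta,\phi_\delta^*)$ only weak duality via Fenchel--Young is needed.
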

\begin{proof}
Let $\sigma_\delta = \argmin_{\tau \in \Sigma_f} J^*_\delta(\tau)$.
By duality (Theorem~\ref{thm:Duality}) and the definition of $e^*_\delta$ we have the bound
\begin{align*}
J(u_g) - J_\delta(u_{\delta,g}) = J_\delta^*(\sigma_\delta) - J^*(\sigma) \leq J_\delta^*(\sigma) - J^*(\sigma) = \sum_{\alpha\in\cA} w_\alpha\,e^*_\delta(w_\alpha^{-1}|\sigma_\alpha|).
\end{align*} 
Moreover, it holds that
\begin{align*}
J(u_g) = J_\delta(u_g) + \sum_{\alpha\in\cA} w_\alpha\,e_\delta(|(Bu_g)_\alpha|).
\end{align*}
Combining these statements concludes the proof.
%
\end{proof}
Since the contribution $\sum_{\alpha\in\cA} w_\alpha\,e^*_\delta(w^{-1}_\alpha |\sigma_\alpha|)$ tends to zero as the relaxation interval $\delta$ is enlarged, there exists for any $\varepsilon > 0$ some interval $\delta \subset (0,\infty)$ such that 
\begin{align*}
J_\delta(u_g)-J_\delta(u_{\delta,g})  \leq  \varepsilon.
\end{align*}
Using the notion of distance in Lemma~\ref{lem:natDist} and the convergence statements in Theorem~\ref{thm:ConvOfDualIRLS} and ~\ref{thm:ConvPrimal} we thus obtain for this relaxation interval $\delta$, the iterates $u_g^n \in \mathbb{R}^N$ in our dIRLS scheme \eqref{eq:DualKac}, and the minimizer of the regularized problem $\sigma_\delta \in \Sigma_f$ with constants $ \kappa_\delta \in (0,1)$ and $C_\delta<\infty$ depending on $\delta = \delta(\varepsilon)$ the bound
\begin{align}\label{eq:COnvergenceResult}
\begin{aligned}
&\sum_{\alpha \in \cA} w_\alpha |V_{\phi_\delta}((B u_g)_\alpha) - V_{\phi_\delta}((B u_g^n)_\alpha)|^2\\
&\qquad\leq 2 \sum_{\alpha \in \cA} w_\alpha |V_{\phi_\delta}((B u_g)_\alpha) - V_{\phi_\delta}((B u_{\delta,g})_\alpha)|^2  \\
&\qquad\quad + 2\sum_{\alpha \in \cA} w_\alpha |V_{\phi_\delta}((Bu^n_g)_\alpha) - V_{\phi_\delta}(B (u_{\delta,g})_\alpha)|^2 \\
&\qquad \lesssim J_\delta (u_g) - J_\delta(u_{\delta,g}) + J_\delta (u^n_g) - J_\delta(u_{\delta,g})\\
&\qquad \leq \varepsilon + C_\delta (1-\kappa_\delta)^n\big(J^*_\delta(\sigma^0) - J_\delta^*(\sigma_\delta) \big).
\end{aligned}
\end{align}
\section{Applications}
We conclude the paper with a numerical study of the dIRLS scheme in MATLAB. 
The implementations used to generate the results below can be found in \cite{StornCode}. We solve linear systems of equations using MATLAB's direct solver \texttt{mldivide}.  

\subsection{Linear regression in $\ell^p$}\label{subsec:ExpLinRegresssion}
In our first experiment we investigate the performance of our dIRLS scheme for the (unconstrained) linear $\ell^p$ regression discussed in Section~\ref{subsec:linReg}; that is, given a matrix $A \in \mathbb{R}^{M\times N}$ with $M\geq N$, a vector $b \in \mathbb{R}^M$, and an exponent $p \in [2,\infty)$ we seek the minimizer 
\begin{align*}
u = \argmin_{v\in \mathbb{R}^N}\, \lVert A v - b\rVert_{\ell^p}^p.
\end{align*}
We compare our method numerically to a Newton scheme with regularized integrand $\phi_\textup{Newton}(t) \coloneqq (t^2 + \epsilon^2)^{p/2}$ and line-search using backtracking. Furthermore, we compare it to the $p$-IRLS scheme -- a modification of the IRLS scheme for exponents $p> 2$ introduced in \cite{AdilPengSachdeva19} (which should perform similarly to the modified IRLS scheme in \cite{AdilKyngPengSachdeva24} as claimed in \cite[Sec.~6.2]{AdilKyngPengSachdeva24}). 
%
This scheme performs better than the MATLAB/CVX solver and homotopy-based approaches \cite[Sec.~4]{AdilKyngPengSachdeva24}. The method has been refined in \cite{EneNguyenVladu25}.
According to their numerical experiments for $p=8$, this refinement requires fewer iterations in the sense that they need approximately $80\%$ of the iterations in the $p$-IRLS to obtain results of similar quality.
Our experiment follows \cite{AdilPengSachdeva19,AdilKyngPengSachdeva24}: We create a matrix $A \in \mathbb{R}^{M\times N}$ and a vector $b\in \mathbb{R}^M$ with $4500 = N < M = 5000$. The entries are sampled from the uniform distribution on $(0,1)$ in the sense that they are for all $i = 1,\dots,M$ and $j=1,\dots,N$ independent and uniformly distributed random variables
\begin{align*}
A_{ij} \sim \mathcal{U}(0,1)\qquad\text{and}\qquad b_i \sim \mathcal{U}(0,1).
\end{align*}
We seek with fixed exponent $p\in [2,\infty)$ the unconstrained minimizer
\begin{align*}
u = \argmin_{v \in \mathbb{R}^N}\, \lVert A v - b \rVert_{\ell^p}^p.
\end{align*}
A reformulation of the problem fitting into our framework is discussed in Section~\ref{subsec:linReg}. We choose the regularization discussed in Section~\ref{subsec:Regularization}; that is, with some interval $\delta = [\delta_-,\delta_+] \subset (0,\infty)$ we set 
\begin{align*}
\phi_\delta'(t) \coloneqq (\delta_- \vee t \wedge \delta_+)^{p-2} t \qquad \text{for all }t\geq 0.
\end{align*}
Given $\sigma^n \in \mathbb{R}^M$, our resulting routine then seeks in each iteration the vector $u^{n+1} \in \mathbb{R}^N$ that solves with weights $a_\alpha^n \coloneqq  |\sigma^n_\alpha|/(\phi_\delta^*)'(|\sigma^n_\alpha|)$ the variational problem
\begin{align*}
\sum_{\alpha = 1}^M a^n_\alpha (A{u}^{n+1} - b )_\alpha (Av)_\alpha = 0\qquad\text{for all }v \in \mathbb{R}^N.
\end{align*}
With diagonal matrix $D^n \coloneqq \textup{diag}(a^n)\in \mathbb{R}^{M \times M}$ this reads equivalently
\begin{align*}
A^\top D^n A u^{n+1} = A^\top D^n b.
\end{align*}
After each solve, we update the dual variable by $\sigma^{n+1} \coloneqq D^n (A u^{n+1}- b)$.

Figure~\ref{fig:Exp1a} displays the convergence history for exponents $p\in \lbrace 10,20,40,80\rbrace$ and relaxation interval $\delta = [10^{-9},10^9]$. As expected, for moderate values of $p$ the Newton scheme with line search converges much faster than the dIRLS and $p$-IRLS scheme. However, for larger exponents $p$, we observe a pre-asymptotic regime with slow convergence. For even larger exponents $p$ the scheme does not converge. The $p$-IRLS scheme converges for a wider regime of exponents $p$. However, for $p = 80$ its practical limitations are reached as well -- even though the theory states convergence for any exponent $p>2$, the large exponent $p$ causes numerical instabilities that result in unfeasible line searches. Our dIRLS scheme converges for any tested value of $p$ (even though the rate of convergence deteriorates as $p$ is enlarged). For $p = 20,40,80$ it furthermore converges much faster than the other schemes. 
\begin{figure}
\begin{tikzpicture}
\begin{axis}[
clip=false,
width=.5\textwidth,
height=.45\textwidth,
cycle multi list={\nextlist MyColors3},
scale = {1},
ymode = log,
clip = true,
legend cell align=left,
legend style={legend columns=1,legend pos = north east,font=\fontsize{7}{5}\selectfont},
]
	\addplot table [x=iter,y=dIRLS] {Experiments/Exp1a_p10.dat};
	\addplot table [x=iter,y=Newton] {Experiments/Exp1a_p10.dat};
	\addplot table [x=iter,y=pIRLS] {Experiments/Exp1a_p10.dat};	
	\node[
  anchor=south west,
  draw=black,
  fill=white,
  font=\fontsize{7}{5}\selectfont,
  inner sep=1.5pt
] at (rel axis cs:0.03,0.03) {$p=10$};
\end{axis}
\end{tikzpicture}
\begin{tikzpicture}
\begin{axis}[
clip=false,
width=.5\textwidth,
height=.45\textwidth,
cycle multi list={\nextlist MyColors3},
scale = {1},
ymode = log,
clip = true,
legend cell align=left,
legend style={legend columns=1,legend pos = north east,font=\fontsize{7}{5}\selectfont},
]
	\addplot table [x=iter,y=dIRLS] {Experiments/Exp1a_p20.dat};
	\addplot table [x=iter,y=Newton] {Experiments/Exp1a_p20.dat};
	\addplot table [x=iter,y=pIRLS] {Experiments/Exp1a_p20.dat};	
	\node[
  anchor=south west,
  draw=black,
  fill=white,
  font=\fontsize{7}{5}\selectfont,
  inner sep=1.5pt
] at (rel axis cs:0.03,0.03) {$p=20$};
\end{axis}
\end{tikzpicture}
\begin{tikzpicture}
\begin{axis}[
clip=false,
width=.5\textwidth,
height=.45\textwidth,
cycle multi list={\nextlist MyColors3},
scale = {1},
ymode = log,
clip = true,
legend cell align=left,
legend style={legend columns=1,legend pos = north east,font=\fontsize{7}{5}\selectfont},
]
	\addplot table [x=iter,y=dIRLS] {Experiments/Exp1a_p40.dat};
	\addplot table [x=iter,y=Newton] {Experiments/Exp1a_p40.dat};
	\addplot table [x=iter,y=pIRLS] {Experiments/Exp1a_p40.dat};	
		\node[
  anchor=south west,
  draw=black,
  fill=white,
  font=\fontsize{7}{5}\selectfont,
  inner sep=1.5pt
] at (rel axis cs:0.03,0.03) {$p=40$};
\end{axis}
\end{tikzpicture}
\begin{tikzpicture}
\begin{axis}[
clip=false,
width=.5\textwidth,
height=.45\textwidth,
cycle multi list={\nextlist MyColors3},
scale = {1},
ymode = log,
clip = true,
legend cell align=left,
legend style={legend columns=1,legend pos = north east,font=\fontsize{7}{5}\selectfont},
]
	\addplot table [x=iter,y=dIRLS] {Experiments/Exp1a_p80.dat};\label{line1}
	\addplot table [x=iter,y=Newton] {Experiments/Exp1a_p80.dat};\label{line2}
	\addplot table [x=iter,y=pIRLS] {Experiments/Exp1a_p80.dat};\label{line3}
		\node[
  anchor=south west,
  draw=black,
  fill=white,
  font=\fontsize{7}{5}\selectfont,
  inner sep=1.5pt
] at (rel axis cs:0.03,0.03) {$p=80$};
\end{axis}
\end{tikzpicture}
\caption{Distance $\lVert Au^n - b \rVert_{\ell^p} - \lVert Au - b \rVert_{\ell^p}$ plotted against the number of iterations $n$ for the dIRLS scheme \ref{line1}, Newton's method \ref{line2}, and the $p$-IRLS scheme \ref{line3}.}\label{fig:Exp1a}
\end{figure}

\subsection{Graph $p$-Laplacian}\label{subsec:GraphPlap}
In this experiment we investigate the beneficial properties of the dIRLS scheme for problems where the graph $p$-Laplacian serves as a smoother, cf.~Section~\ref{subsec:app-ssl}. 
Notice that such applications offer the beneficial property that by design they should avoid large gradients, which reduces or removes the impact of the upper regularization bound $\delta_+$ even for moderate values of $\delta_+$. Indeed, in our experiments enlarging the relaxation interval $\delta = [10^{-3},10^3]$ led to the same results.  
We recreate an experiment from \cite{Flores18,FloresCalderLerman22}, where we use a scattering transform \cite{BrunaMallat13} to extract
features from a dataset which are then used to build the symmetric $K$-nearest neighbor graph with $K=10$ and weights 
\begin{align*}
w_e \coloneqq \exp(-|i-j|^2/\sigma^2)\text{ for } e = [i,j] \in \mathcal{E} \text{ and } \sigma\coloneqq \max\lbrace |i-j|/2 \colon e = [i,j]\in \mathcal{E}\rbrace. 
\end{align*}
Our first dataset is the MNIST dataset \cite{LecunBottouBengioHaffner98} containing $N = 70\, 000$ grayscale $28 \times 28$ pixel images of handwritten digits from $0$ to $9$, with about $7000$ images for each digit.
The second dataset is the Fashion MNIST \cite{XiaoRasulVollgrad17} which is similar to the MNIST except ten fashion items replacing the digits.
We aim at classifying the items (digits/fashion items); that is, given one labeled image for each item, we solve the $10$-class classification problem by a standard one-vs-rest approach. More precisely, for a fixed item, we assign to the corresponding labeled image the boundary value $+1$, while each labeled image representing another item obtains the boundary value $-1$. We then solve the graph $p$-Laplace problem, resulting in a function $u = (u_j)_{j=1}^N \in \mathbb{R}^N$ that assigns to each vertex $j$ the score $u_j$. Performing this computation for each item gives scores, and the final classification is chosen as the class with the highest score.
We fix the exponent $p=10$ in our computations. The results for larger exponents are similar. Our first iterate  $\sigma^1 \in \mathbb{R}^M$ results from solving the graph Laplacian.

In \cite{FloresCalderLerman22} the problem is solved by a Newton method with homotopy in $p$. This worked sufficiently well for their applications, but it requires some fine-tuning of parameters. Other approaches include the MATLAB/CVX solver and the $p$-IRLS schemes of \cite{AdilKyngPengSachdeva24,EneNguyenVladu25}, which were outperformed by our dIRLS method. Our results for the problem at hand are displayed in Figure~\ref{fig:Exp2a}. It shows that the first dIRLS iteration (using the solution to the graph Laplacian as the initial iterate) results in a significant gain with respect to the accuracy of our predictions and a significant improvement of the energy approximation. Thereafter, the dIRLS proceeds to converge linearly towards the minimizer. However, this does not improve the accuracy of the prediction anymore. In fact, in machine learning applications it often suffices to obtain a decent approximation of the minimizer. In our experiments, such decent approximations were obtained with one additional dIRLS iteration after solving the graph Laplacian. In other words, with roughly twice the effort of a standard graph-Laplacian solve we could improve the accuracy of the semi-supervised learning routine significantly. This makes the dIRLS a very attractive method for these types of applications. 

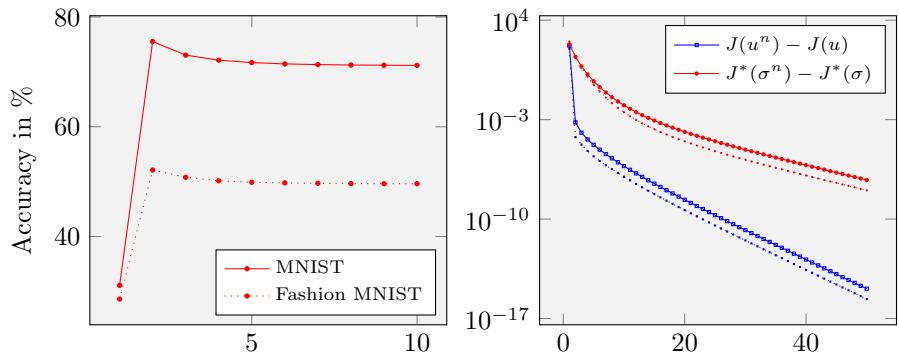
\begin{figure}
{\centering
\begin{tikzpicture}
\begin{axis}[
clip=false,
width=.5\textwidth,
height=.45\textwidth,
cycle multi list={\nextlist MyColors},
scale = {1},
ylabel = {Accuracy in $\%$},
clip = true,
legend cell align=left,
legend style={legend columns=1,legend pos= south east,font=\fontsize{7}{5}\selectfont}
]
	\addplot table [x=iter,y=CorrectInPercent] {Experiments/Exp2a_70000_p_10.dat};
	\addplot table [x=iter,y=CorrectInPercent] {Experiments/Exp2b_70000_p_10.dat};	
	\legend{{MNIST},{Fashion MNIST}};

\end{axis}
\end{tikzpicture}
\begin{tikzpicture}
\begin{axis}[
clip=false,
width=.5\textwidth,
height=.45\textwidth,
cycle multi list={\nextlist MyColors2},
scale = {1},
ymode = log,
clip = true,
legend cell align=left,
legend style={legend columns=1,legend pos = north east,font=\fontsize{7}{5}\selectfont},
]
	\addplot table [x=iter,y=err_J] {Experiments/Exp2a_70000_p_10_Conv.dat};
	\addplot table [x=iter,y=err_J_star] {Experiments/Exp2a_70000_p_10_Conv.dat};
	\addplot table [x=iter,y=err_J] {Experiments/Exp2b_70000_p_10_Conv.dat};
	\addplot table [x=iter,y=err_J_star] {Experiments/Exp2b_70000_p_10_Conv.dat};
	\legend{{$J(u^n) - J(u)$},{$J^*(\sigma^n) - J^*(\sigma)$}};
\end{axis}
\end{tikzpicture}
\caption{The left-hand side displays the accuracy in predicting the $70\, 000$ digits/fashion items. The right-hand side displays the energy errors for the MNIST (thick line) and the Fashion MNIST (thin line).} \label{fig:Exp2a}}
\end{figure}
\subsection*{Acknowledgments}
The author is grateful to Professor Jeffrey Calder for insightful discussions on the importance of the graph $p$-Laplacian in semi-supervised learning which initiated the research presented here.

\printbibliography

\end{document}